\newcommand{\citep}[1]{\cite{#1}}
\newcommand{\citet}[1]{\cite{#1}}
\newcommand{\abs}[1]{\left| #1 \right|}
\newcommand{\norm}[1]{\left\Vert #1 \right\Vert}
\newcommand{\jmp}[1]{\left[#1\right]}
\newcommand{\avg}[1]{\lbrace #1 \rbrace}
\newcommand{\conj}[1]{\overline{#1}}
\newcommand{\Thetadef}{\Theta_{K,\mathrm{def}}}
\newcommand{\Thetapre}{\Theta_{\mathrm{pre}}}
\newcommand{\bigO}[1]{\mathcal{O}(#1)}
\newtheorem{thm}{Theorem}
\newcolumntype{L}[1]{>{\raggedright\let\newline\\\arraybackslash\hspace{0pt}}m{#1}}
\title{{A ray-based IPDG method for high-frequency time-domain acoustic wave propagation in inhomogeneous media}
}
\author{
Eric T. Chung
\thanks{Department of Mathematics, The Chinese University
of Hong Kong. {Email: tschung@math.cuhk.edu.hk.
Eric Chung's research is partially supported by Hong Kong RGC General Research Fund (Project: 14317516)
and CUHK Direct Grant for Research 2016/17.}}\and
Chi Yeung Lam
\thanks{Department of Mathematics, The Chinese University
of Hong Kong. {Email: cylam@math.cuhk.edu.hk.}}\and
Jianliang Qian
\thanks{Department of Mathematics, Michigan State University, East Lansing, MI 48824. Email: qian@math.msu.edu}}
\begin{document}

\maketitle

\begin{abstract}
The numerical approximation of high-frequency wave propagation in inhomogeneous media is a challenging problem. 
In particular, computing high-frequency solutions by direct simulations requires several points per wavelength for stability and usually requires many points per wavelength for a satisfactory accuracy. 
In this paper, we propose a new method for the acoustic wave equation in inhomogeneous media in the time domain to achieve superior accuracy and stability without using a large number of unknowns. 
The method is based on a discontinuous Galerkin discretization together with carefully chosen basis functions. 
To obtain the basis functions, we use the idea from geometrical optics and construct the basis functions by using the leading order term in the asymptotic expansion. 
Also, we use a wavefront tracking method and a dimension reduction procedure to obtain dominant rays in each cell. 
We show numerically that the accuracy of the numerical solutions computed by our method is significantly higher than that computed by the IPDG method using polynomials.
Moreover, the relative errors of our method grow only moderately
as the frequency increases.
\end{abstract}

\section{Introduction}
In this paper, we consider the acoustic wave equation in inhomogeneous media given by
\begin{eqnarray}
	u_{tt}(x,t) - c^2(x)\Delta u(x,t) = 0, \quad (x,t) \in \mathbb{R}^2\times[0,T], \label{acoustic}
\end{eqnarray}
with initial conditions 
$
	u(x,0) = \sum_{l=1}^L A_l(x) e^{i\omega \phi_l(x)}
$
and
$
	u_t(x,0) = \sum_{l=1}^L i\omega B_l(x) e^{i\omega \phi_l(x)},
$
where $c(x)>0$ is the wave speed in the medium, $T>0$ is a given time and $\omega$ is the frequency of the input. 
Due to the use of the theory of geometrical optics in the construction of basis functions, we assume that the initial condition $u(x,0)$
is a superposition of functions in the form $A(x) e^{i\omega \phi(x)}$ for some smooth functions $A(x)$ and $\phi(x)$ independent of $\omega$,
and the initial condition $u_t(x,0)$ is a superposition of functions 
in the form $i\omega B(x) e^{i\omega \phi(x)}$ where the amplitude function $B(x)$ is also a smooth function independent of the frequency $\omega$.
We note that, for more general cases, one can use the technique of micro-local analysis
to express the initial conditions as a superposition of functions in the form $A(x) e^{i\omega \phi(x)}$ \cite{qiayin10a,qiayin10b}. 
We are interested in high-frequency solutions propagating in a medium with smooth wave speed $c(x)$,
in the sense that the variation within each cell of an underlying domain partition is small. 
Thus, we will assume that the frequency $\omega \gg 1$ is a very large number.

\subsection{Our work}
We consider high-frequency solutions of the problem \eqref{acoustic} given by a superposition of wave components:
\begin{eqnarray}
	u(x,t) \approx \text{superposition of } \Big\{ A_k(x,t) e^{i\omega \phi_k(x,t)} \Big\}_{k=1}^N,\label{WKB}
\end{eqnarray}
where $\omega \gg 1$ is the base frequency and $\phi_k$ are the phase functions that satisfies
$$(\phi_{k})_t(x,t) \pm \abs{\nabla\phi_k(x,t)} c(x) =0.$$
In our method, we assume that the expansion (\ref{WKB}) holds in each cell of an underlying partition of the computational domain,
and the number of waves $N$ can vary from cell to cell. 
One can use evenly-distributed ray directions together with plane wave type basis functions. For good approximations, one needs to use many ray directions
but this makes the Galerkin method inefficient and ill-conditioned.

In this paper, 
we consider a modified \emph{wavefront-tracking method}\cite{vinje1993traveltime,lambare1996two,bulant1999interpolation} to capture possible phases in the solution before the actual
simulation.
Since the number of phases can be large in general, we assume that there is a small number of phases that contribute most to the solution. 
Then, we apply a clustering procedure and a dimension reduction procedure to this set of all possible phases and obtain the dominant phases $\phi_k(x)$ in the solution.
Using these dominant phases, we define our basis functions as
$A_k(x) e^{i\omega \phi_k(x)}$,
where $A_k$ are polynomials. We remark that these basis functions are obtained for each cell in the partition of the domain. 
Finally, we use these basis functions together with an interior penalty discontinuous Galerkin (IPDG) method and obtain the approximate solution by solving the resulting linear system. 
We note that the basis functions can also be used for other DG discretizations such as \cite{chung2006optimal,chung2009optimal}. 
We also remark that the wavefront-tracking method can be implemented in a parallel way effortlessly. Moreover, we propose an online/offline scheme to accelerate the simulations for a given medium with many initial conditions,
and this situation arises in many practical applications involving inversions. 
Notice that we will consider smooth media in this paper.
This means that the computational mesh is fine enough to capture the variations in the media. 
For media with more oscillations that cannot be captured by the computational grid, one has to apply some type of multiscale ideas, such as \cite{chung2014generalized,gao2015numerical,gao2015generalized,chung2016mixed}. 
By using our proposed scheme, we are able to compute the solution of the acoustic wave equation in the high-frequency regime in a very efficient way.
Our numerical results show that our scheme is robust on the frequency, i.e. the errors do not grow significantly with the frequency. 
Furthermore, we compare our scheme with the IPDG scheme that uses the standard polynomial basis.
We observe that, with about the same number of unknowns, our scheme performs much better than standard schemes. 
To the best of our knowledge, our method is the first one that combines ray-based idea
and Galerkin method to solve time-dependent wave equations with high frequency solutions.

\subsection{Related works}
In literature, there are some works that focus on 
the closely related problem given by the Helmholtz equation
\begin{eqnarray}
\Delta v(x) + \frac{\omega^2}{c^2(x)}v(x) = 0,\label{helmholtz}
\end{eqnarray}
with frequency $\omega \gg 1$.

On one hand, due to the Shannon's sampling theorem, the minimum requirement for the number of degrees of freedom of representing the solution is of $\bigO{\omega^d}$.
On the other hand, the pollution effect haunts the standard Galerkin methods using low order polynomials as the basis \cite{babuska1997pollution, ihlenburg2006finite}, i.e. the $L^2$-error grows as $\omega$ getting large while keeping $\omega h$ fixed.
A common strategy for solving this problem is to incorporate oscillatory functions into the basis for the Galerkin methods.
This approach significantly reduces the number of degrees of freedom required for accurately representing the solution.

Mainly, there are two types of methods using this strategy.
One type of methods does not assume knowledge of the propagation directions of the wave-field.
Instead, they incorporate analytic or approximate solutions with predefined directions into the basis.
For example, the generalized finite element method \cite{babuvska1995generalized, melenk1995generalized} uses basis function given by the product of plane waves with uniformly-spaced directions and finite element basis functions.
The Trefftz methods use local solutions of the Helmholtz equation as the basis functions\cite{hiptmair2015survey,egger2015space}.
In particular, a common choice of Trefftz basis for a homogeneous medium is the plane waves, for example, as used in the plane wave discontinuous Galerkin method \cite{gittelson2009plane,hiptmair2011plane,hiptmair2016plane} and the discontinuous enrichment method \cite{Farhat20016455}.
For an inhomogeneous medium, some recent papers have proposed methods to construct the basis consisting of approximate local solutions of the form $e^{P(x)}$ with a complex polynomial $P$\cite{imbert2014generalized,imbert2015numerical,imbert2015interpolation}.

Another type of methods relies on an asymptotic approximation, i.e. the WKB ansatz.
This ansatz assumes that a high-frequency solution of the Helmholtz equation \eqref{helmholtz} can be locally approximated by a superposition of wave components,
\begin{eqnarray}
	v(x) \approx \text{superposition of } \Big\{ A_k(x)e^{i\omega\phi_k(x)}  \Big\}_{k=1}^N,
\end{eqnarray}
where $A_k$ and $\phi_k$ are non-oscillatory and independent of $\omega$. The function $A_k$ and $\phi_k$ are called the amplitude and phase, respectively.
A comprehensive survey of the techniques arising from this ansatz can be found in \cite{engquist2003computational}.
These methods incorporate the phases $\phi_k$ in the basis of a Galerkin method and usually called phase-based methods.
One fundamental difficulty of these methods is how the phases $\phi_k$ should be approximated.
For example, a phase-based hybridizable discontinous Galerkin (HDG) method is proposed in \cite{nguyen2015phase}. This method uses basis consisting of the product of polynomials and oscillatory functions of the form $e^{i\omega\phi(x)}$, where the phases $\phi$'s are obtained from the eikonal equation or ray-tracing.
Later, \cite{lam2016phase} analyzes the $h$-convergence of a phase-based IPDG method and shows that, under some conditions, these basis functions have the same approximation power as the polynomials in the product.
Instead of incorporating the global phases, \cite{betcke2012approximation} proposed to use the product of polynomials and plane wave with dominant direction as the basis function for an inhomogeneous medium. 
Numerical results show that such basis performs significantly better than uniformly-spaced plane waves regarding efficiency and accuracy.
Recently, a ray-based finite element method (ray-FEM) is proposed to learn and incorporate dominant directions into the basis in a stable, efficient and systematic way\cite{fanqiazepzha17}. Numerical tests show that this method achieves asymptotic convergence as $\bigO{\omega^{-\frac{1}{2}}}$ when $\omega \to \infty$.

This paper is organized as follows. In Section 2, we give an overview of our proposed method.
In Section 3, we give the detailed algorithms of our method.
In addition, we discuss some computational issues, including parallel computation and conditioning of the resulting linear system.
In Section 4, we present some numerical experiments to study the behavior of the error  as $\omega \to \infty$ and also mesh size $h\to 0$.
Finally, we conclude the paper in Section 5.

\section{The ray-construction based IPDG method}
In this section, we present the key ingredients of
our ray-construction based IPDG method for the high frequency acoustic wave propagation problem (\ref{acoustic}). The algorithmic details are presented in Section \ref{sec:algorithms}.
The main idea of our approach is to
approximate the wave-field via a superposition of plane waves with only dominant ray directions in the solution during the simulation.
We will combine the idea of wavefront tracking and a dimensional reduction procedure to construct the dominant ray directions in the solution.
We then
incorporate these dominant ray directions to form 
plane wave basis functions for an IPDG method to improve the accuracy and stability for
computation of high-frequency solutions.

Consider a cell of an underlying partition of the domain and assume 
the ansatz \eqref{WKB} in this cell. Note that $N$ is the number of phases and can be large in general. It also varies in time. 
Recall that the amplitude $A_k$ and phase $\phi_k$ are independent of the frequency $\omega$.
We take a point $x_0$ in the center of the cell with size $h$.
Using Taylor expansions of $\phi_k$ and $A_k$ in $x$ around the point $x_0$, we have
\begin{equation}
\begin{split}
	u_k(x,t) = & \left(A_k(x_0,t)+\nabla A_k(x_0,t) \cdot (x-x_0)\right) e^{i\omega (\phi_k(x_0,t) + \nabla \phi_k(x_0,t)\cdot(x-x_0))} \\ & \quad + \bigO{h^2 + \omega h^2 + \omega^{-1}},
	\end{split}
\label{WKBappr}
\end{equation}
for $\abs{x-x_0} < h \ll 1$. 
When $\omega \to \infty$ and $\omega h = \bigO{1}$, the asymptotic error decreases as $\bigO{\omega^{-1}}$.
This suggest that we can represent each component of the solution locally at $x_0$ by
the product of a linear function and a plane wave {$e^{i\omega\nabla \phi_k(x_0,t)\cdot (x-x_0)}$}.
Moreover, we essentially need $\nabla \phi_k(x_0,t)$ to determine the polynomial-modulated plane wave in the local approximation. In this paper, 
our aim is to find the {\bf ray directions}
$\nabla \phi_k(x_0,t)$
and determine the {\bf dominant ray directions} within the set of ray directions. 

Now, suppose there is a small error $\varepsilon$ in the vector $\nabla \phi_k(x_0,t)$. Then the asymptotic error in \eqref{WKBappr} becomes $\bigO{h^2+ \omega h \varepsilon+\omega h^2 + \omega^{-1}}$. Also, when $\omega \to \infty$ and $\omega h = \bigO{1}$, the asymptotic error decreases as $\bigO{\varepsilon+\omega^{-1}}$.
Our approach is to replace all the $\nabla \phi_k(x_0,t)$ by some approximations, which are easy to compute. 
In exchange, the number of degrees of freedom could be significantly reduced and the resulting system would be better conditioned.
Despite the extra $\bigO{\varepsilon}$ term in the ansatz, our numerical results in Section 5 confirm that our method still benefits a lot from such approximation.

To simplify our discussion, from now on we restrict our discussion to the domain {$\Omega = (0,1)^2$},
and assume {that} the solution $u(x,t)$ satisfies the periodic boundary condition. 
We divide the domain $\Omega$ into $N\times N$ square cells $\mathcal{T}_h$, given by $K_{ij} := [\frac{i-1}{N}, \frac{i}{N}]\times[\frac{j-1}{N}, \frac{j}{N}],$ for $1\leq i,j \leq N$,
where $h = 1/N >0$ is the mesh size.
We let $\mathcal{F}_h$ be the set of faces of the partition. 
For a given cell $K \in\mathcal{T}_h$,
we define $x_K$ to be the centroid of $K$. We call these points the \emph{observation points}.	
For each $x_K$, we define the set $\Theta_K$ of all possible ray directions by
$$
\Theta_{K} := \left\lbrace \nabla \phi_k(x_K, t) :  1\leq k\leq N_K, \,0 \leq t \leq T \right\rbrace
$$
where $N_K$ is the number of phases, which depends on the cell $K$ and also the time $t$. 
We remark that we assume the solution contains a finite, and possibly large,
number of phases at any time instant.

Next, we use the symmetric interior-penalty type discontinuous Galerkin (IPDG) method \cite{grote2006discontinuous} to compute the solution of \eqref{acoustic}.
For $j = 1,2,3,4$, let $v_{j,K}$ be the four vertices of $K$,
and {$\varphi_{j,K}$} be the standard Lagrange-type 
bilinear basis on $K$ such that {$\varphi_{j,K}(v_{i,K}) = \delta_{ij}$}, where $\delta_{ij}$ is the Kronecker delta. 
We define an enriched local approximation space by
$$
V(\Theta_K) = \mbox{span} \left\lbrace \varphi_{j,K} e^{i\omega p\cdot (x-x_K)} : p\in \Theta_K, \; j = 1,2,3,4 \right\rbrace
$$
where functions in $V(\Theta_K)$ are defined on the cell $K$ only. 
Then we can define the global approximation space 
\begin{equation*}
V_c = \cup_{K\in\mathcal{T}_h} V(\Theta_K).
\end{equation*}
Following the derivation of the standard IPDG method,
we can write down the following semi-discrete scheme: find $u_h \in V_c$ such that 
$$
\left( \frac{\partial^2 u_h}{\partial t^2} ,v_h \right) + a^{\gamma}_h(u_h, v_h) = {0},
$$
for any $v_h\in V_c$, where $(u,v) =\int_{\Omega} c^{-2}\, u\conj{v}\, dx$ and 
the bilinear form $a^{\gamma}_h$ is given by
\begin{equation}
\begin{split}
a^{\gamma}_h(u,v) := & \int_{\Omega} \nabla u \cdot \conj{\nabla v}\, dx
 - \sum_{F\in\mathcal{F}_h} \int_F \avg{\nabla u \cdot n}\, \conj{\jmp{v}}\, ds 
 - \sum_{F\in\mathcal{F}_h} \int_F {\jmp{u}}\,\conj{\avg{\nabla v \cdot n}}\, ds\\ \qquad & 
 + \frac{\gamma}{h} \int_F \jmp{u}\conj{\jmp{v}}\, ds,
 \label{a_h}
\end{split}
\end{equation}
for any $u,v\in V$, where
$\gamma >0$ is the penalty parameter. 
Here $[\,\cdot\,]$ and $\lbrace\,\cdot\,\rbrace$ are the usual jump and average operators in discontinuous Galerkin methods, which are defined in the following way.
Let $K^{\pm}$ be two tiles sharing an edge $F$, $n^{\pm}$ be the outward normal of $K^{\pm}$ on $F$ and $u^{\pm}$ be the two smooth scalar functions on $K^{\pm}$.
The average operator $\avg{\,\cdot\,}$ and jump operator $\jmp{\,\cdot\,}$ are given by
$$
\avg{u} := \frac{1}{2}(u^+ + u^-) \mbox{\quad and\quad } \jmp{u} := u^+n^+ + u^-n^-,
$$
respectively.
Similarly, for smooth vector {fields} ${\boldsymbol\sigma}^{\pm}$ defined on $K^{\pm}$, respectively,
the average operator $\avg{\,\cdot\,}$ and jump operator $\jmp{\,\cdot\,}$ are given by
$$
\avg{\boldsymbol\sigma} := \frac{1}{2}(\boldsymbol\sigma^+ + \boldsymbol\sigma^-)
\mbox{\quad and\quad }
\jmp{\boldsymbol\sigma} := \boldsymbol\sigma^+\cdot n^+ + \boldsymbol\sigma^-\cdot n^-.
$$

We note that the set $\Theta_K$ contains a continuum of ray directions, and thus cannot be used directly for computations. 
With this in mind, we will 
construct a finite set $\widetilde{\Theta}_{K}$, which is a subset of $\Theta_K$
and contains all dominant ray directions within the cell $K$. 
For this purpose, we will consider a partition of $[0,T]$ with time step $\Delta t$,
and we denote $t_m = m \Delta t$, $m=0,1,\cdots$. Then we consider the set
\begin{equation*}
\widehat{\Theta}_{K} := \left\lbrace p : p = \nabla \phi_k(x_K, t_m), \; \text{for all } k, t_m \text{ satisfying } 1\leq k\leq N_K, 0\leq t_m \leq T \right\rbrace.
\end{equation*}
We note that the set $\widehat{\Theta}_{K}$ is finite and contains all ray directions resolved by the partition in time.
We emphasize that this set has a large dimension, and cannot be used directly in simulations. 
One key ingredient of our method is to perform a dimension reduction for the set $\widehat{\Theta}_{K}$
and obtain a subset $\widetilde{\Theta}_{K}$ with a much smaller dimension. 
In particular, the set $\widetilde{\Theta}_{K}$ has the form
\begin{equation*}
\widetilde{\Theta}_{K} := \left\lbrace p : p = \nabla \phi_k(x_K, t_m), \; \text{for some } k, t_m \text{ satisfying } 1\leq k\leq N_K, 0\leq t_m \leq T \right\rbrace.
\end{equation*}
We will give a detail discussion on how to obtain this set in the next section.
Next, we define an enriched local approximation space by
$$
V(\widetilde{\Theta}_K) = \mbox{span} \left\lbrace \varphi_{j,K} e^{i\omega p\cdot (x-x_K)} : p\in \widetilde{\Theta}_K, \; j = 1,2,3,4 \right\rbrace
$$
where functions in $V(\Theta_K)$ are defined on the cell $K$ only. 
Then we can define the global approximation space 
\begin{equation*}
V = \cup_{K\in\mathcal{T}_h} V(\widetilde{\Theta}_K).
\end{equation*}
Using the space $V$, we can write down the fully-discrete scheme: 
Given $u^0_h \in V$ and $u^1_h \in V$, for $n\Delta t < T$, we find $u_h^{n+1} \in V$ such that
$$
\left( \frac{u^{n+1}_h - 2 u^{n}_h + u^{n-1}_h}{\Delta t^2} ,v_h \right) + a^{\gamma}_h(u^n_h, v_h) = {0},
$$
for any $v_h\in V$. 
We remark that, by using a standard stability analysis, the time step $\Delta t$
should be chosen such that $\Delta t \| \mathbf{A}\|_2 < 1$,
where $\mathbf{A}$ is the stiffness matrix resulting from the bilinear form $a_{\gamma}$
and $\|\cdot\|_2$ denotes the $2$-norm. 

The above gives a general outline of our scheme. In the next section,
we present the detailed implementations.

\section{Algorithms}
\label{sec:algorithms}
In this section, we present the full algorithm for the ray based IPDG method.
We divide it into two conceptual stages:
\begin{enumerate}
\item (ray-construction stage) determining the {dominant} rays in the solution, i.e. the set $\widetilde{\Theta}_{K}$ at every observation point $x_K$.
\item (time marching stage) solving the IPDG system using the approximate space $V(\widetilde{\Theta}_K)$.
\end{enumerate}

For the phase-construction stage, we use a modified version of the wavefront tracking method proposed in \cite{vinje1993traveltime} to determine all ray directions of the solution throughout $0\leq t\leq T$.
In particular, we will obtain approximations to the ray directions at the observation point $x_K$ for the cell $K$ in the domain partition. 
We call the set of all these ray directions $\widetilde{\Theta}_K^*$.
Afterward, we apply a clustering procedure on $\widetilde{\Theta}_K^*$
and obtain our desired set $\widetilde{\Theta}_K$ of dominant ray directions. 

In the following Sections 3.1--3.4, we discuss the procedures for the phase-construction stage.
In Section 3.5, we integrate the ideas from Sections
3.1--3.4 into an algorithm for the ray-construction stage.
In Section 3.6, we write down an algorithm for the time marching stage.
In Section 3.7, we propose an online/offline scheme based on the ray-construction based IPDG method.

\subsection{Wavefront propagation}
In this procedure, we compute the wavefront propagation and use it to determine the ray directions. 
First of all we define wavefronts, which are essentially the level sets of the phase functions, together with the gradients of the phase functions on the level sets. 
More precisely, we let
$\phi$ be one of the phases $\phi_k$ in the ansatz \eqref{WKB}.
A \emph{wavefront} $W(t_0,\alpha)$, at a fixed time instant $t_0$,
corresponding to the phase function $\phi(x,t_0)$, is essentially the level set curve $\phi(x,t_0)=\phi_0$ for a given constant $\phi_0$, which may depend on $t_0$.
Mathematically, the wavefront $W(t_0,\alpha)$
is a smooth curve $(y(t_0,\alpha), q(t_0,\alpha)) \in \mathbb{R}^2 \times \mathbb{R}^2$, parametrized by $\alpha \in J$,  
such that for any $\alpha$,
\begin{equation}
\label{eq:phase}
	\phi(y(t_0,\alpha), t_0) \equiv \phi_0 \mbox{ and } q(t_0,\alpha) = \nabla \phi(y(t_0,\alpha), t_0)
\end{equation}
where $J$ denotes an interval of real numbers. 
That is, the component $y(t_0,\alpha)\in\mathbb{R}^2$ lies on the level set curve $\phi(x,t_0)=\phi_0$,
and that the level set curve $\phi(x,t_0)=\phi_0$ is represented by the function $y(t_0,\alpha)$ and is parametrized by $\alpha$. 
In addition, the component $q(t_0,\alpha)\in \mathbb{R}^2$ is the gradient vector of the function $\phi(x,t_0)$
at the point $y(t_0,\alpha)$. 

A related concept is a ray.
By inserting $u = A(x,t) e^{i\omega\phi(x,t)} + \bigO{\omega^{-1}}$ into \eqref{acoustic} and considering the leading order term, 
we obtain the following eikonal equation for the phase $\phi(x,t)$:
\begin{eqnarray}
	\phi_t(x,t) + c(x) \abs{\nabla \phi(x,t)} = 0.
\end{eqnarray}
A \emph{ray} $r(t)$ is a bicharacteristic pair $(x(t), p(t)) \in \mathbb{R}^2 \times \mathbb{R}^2$ related to the Hamiltonian $H(x,p) = c(x)\abs{p}$ which is characterized by the following differential equations,
\begin{eqnarray}
	\label{eq:rayODE}
	x_t = c(x) \frac{p}{\abs{p}}, \quad p_t = -\abs{p} \nabla c(x).
\end{eqnarray}
The functions $x(t)$ and $p(t)$ are called the position and ray direction of the ray $r(t)$ respectively.
Rays and wavefronts are related in the following way. 
If a ray $(x(t), p(t))$ lies on a wavefront $W$ initially, that is,
$
(x(0),p(0)) = W(0,\alpha_0) \mbox{ for some } \alpha_0,
$
then the ray lies on the same wavefront for any $t>0$, that is,
$(x(t),p(t)) = W(t, \alpha_0).$

We will approximate a wavefront $W$ at the $n$-th time step $t_n$ by a finite set of points in $\mathbb{R}^2 \times \mathbb{R}^2$, which is called a \emph{discrete wavefront}
and is denoted by $W^n$.
We write $W^n = \lbrace r^n_j \rbrace$. We will parametrize $W^n$ by a discrete set of parameters $\{ \alpha_j\}$.
For example, using the notations in (\ref{eq:phase}), we will take a discrete set $\{ \alpha_j\} \subset J$ and define
\begin{equation*}
r^n_j := ( y(t_n,\alpha_j), q(t_n,\alpha_j)).
\end{equation*}
The wavefronts at the initial time $t=0$ are given by the initial condition of the problem (\ref{acoustic}). 
In particular, we will assume that the phase $\phi(x,0)$ and the ray direction $\nabla \phi(x,0)$ are known at the initial time.
We will use a set of level curves of $\phi(x,0)$ to construct the wavefronts at the initial time. 
The number of these level curves is user-defined. We will assume that the number of these level curves is large enough
so that the ray directions of the initial condition are well resolved by the partition of the computational domain. 
For each of these level curves, we will use (\ref{eq:phase}) to
define a wavefront $W^0$. 
For more precise constructions, see Section \ref{sec:num} where numerical examples are presented.

Suppose that all wavefronts are computed at the $n$-th time step. 
Let $W^n = \lbrace r^n_j \rbrace = \lbrace (y(t_n,\alpha_j),q(t_n,\alpha_j)) \rbrace$ be a given wavefront at the time $t_n$. 
To advance the wavefront $W$ to the $(n+1)$-th time step $t_{n+1}$, we treat each $r^n_j$ as a ray at time $t_n$,
and then take this as the initial condition of (\ref{eq:rayODE})
and compute the solution, via a fourth order Runge Kutta scheme, for one time step (that is, from the time $t_n$ to the time $t_{n+1}$). 
We call this solution as $r^{n*}_j$, and this will be taken as part of the wavefront at the time $t_{n+1}$.
In other words, $r^{n*}_j \approx W\left(t_{n+1}, \alpha_j\right)$.
The pseudocode is given in Algorithm 1.
Note that, the wavefront at the $(n+1)$-st step will be obtained after the reconstruction procedure presented next. 

\begin{algorithm}[h]
\caption{Wavefront propagation}
\begin{algorithmic}[1]
\Procedure{$\lbrace r^{n*}_j\rbrace =$ WavefrontPropagate}{$\lbrace r^n_j\rbrace$, $c$, $\Delta t$}
\ForAll{ $j$}
\State
$(x,p) \gets r^n_j,\quad r^n_{j,1} \gets ( c^2(x)\,p, -\nabla c(x) \abs{p})$
\State
$(x,p) \gets r^n_j + \frac{\Delta t}{2} r^n_{j,1},\quad r^n_{j,2} \gets ( c^2(x)\,p, -\nabla c(x) \abs{p})$
\State
$(x,p) \gets r^n_j + \frac{\Delta t}{2} r^n_{j,2},\quad r^n_{j,3} \gets ( c^2(x)\,p, -\nabla c(x) \abs{p})$
\State
$(x,p) \gets r_j + \Delta t\ r^n_{j,3},\quad r^n_{j,4} \gets ( c^2(x)\,p, -\nabla c(x) \abs{p})$
\State $r^{n*}_j \gets r^n_j + \frac{\Delta t}{6}(r^n_{j,1} + 2 r^n_{j,2} + 2 r^n_{j,3} + r^n_{j,4})$
\EndFor
\EndProcedure
\end{algorithmic}
\end{algorithm}
\subsection{Wavefront reconstruction}
In this procedure, we insert new rays into a discrete wavefront to maintain the quality of the wavefront.
We choose a tolerance function $\mbox{tol}\left(x,p\right) = \alpha_1 \abs{x} + \alpha_2 \abs{p}$ for some constants $\alpha_1$ and $\alpha_2$.
Whenever $\mbox{tol}(r^{n*}_j-r^{n*}_{j+1})\geq 1$ for a pair of neighboring rays $r_j^{n*}$ and $r_{j+1}^{n*}$,
we insert $\lfloor \mbox{tol}(r^{n*}_j-r^{n*}_{j+1}) \rfloor$ new rays between them.
We take the new rays as equidistant linear interpolation of the corresponding neighboring rays.

By connecting each pair of neighboring rays $r_j^{n*}$ and $r_{j+1}^{n*}$ by a line segment, we obtain a linear interpolant in $\mathbb{R}^4$.
We order all $r^{n*}_j$ and the inserted rays along this spline and obtain $W^{n+1} = \lbrace r^{n+1}_j \rbrace$.
We use $j^*$ to denote the new index at the $(n+1)$-th time step corresponding to $r^{n*}_j$.
In other words, $r^{n*}_j$ and $r^{n+1}_{j^*}$ are equal.
It is clear that $j^* \leq \ell \leq (j+1)^*$ for any $r^{n+1}_{\ell}$ inserted between $r^{n+1}_{j^*}$ and $r^{n+1}_{(j+1)^*}$.
The pseudocode of this procedure is shown in Algorithm 2.
Note that, in this algorithm, we also compute
$$
I_j := \max \left\lbrace i : i^*\leq j \right\rbrace,
$$
which will be used in the next procedure.

\begin{algorithm}[H]
\caption{Wavefront reconstruction}
\begin{algorithmic}[1]
\Procedure{$\lbrace r^{n+1}_j \rbrace, \lbrace I_j \rbrace$ = WavefrontRecon}{$\lbrace r^{n*}_j \rbrace$, $\mbox{tol}$}
	\State $j \gets 1$
	\ForAll {$k \gets 1$ \textbf{to} $\left( \mbox{size of } \lbrace r^{n*}_j \rbrace\right)-1$}
		\State $n \gets \lfloor \mbox{tol}(r^{n*}_k - r^{n*}_{k+1}) \rfloor$
		\For {$\ell \gets 0$ \textbf{to} n}
			\State $r^{n+1}_{j+\ell} \gets (1-\frac{\ell}{n+1}) r^{n*}_k + \frac{\ell}{n+1} r^{n*}_{k+1}$
			\Comment{Interpolation}
			\State $I_{j+\ell} \gets k$
		\EndFor
		\State $j \gets j+n$
	\EndFor
	\State $r^{n+1}_{j} \gets r^{n*}_{k+1}, I_{j} \gets k+1$
\EndProcedure
\end{algorithmic}
\end{algorithm}

\subsection{Ray determination}
In this procedure, we determine the ray directions of the solution at an observation point when an approximate wavefront passes through the point.
Let $x_K$ be an observation point. To check whether a wavefront $W$ is passing through $x_K$ between the $n$-th and $(n+1)$-th time step,
we consider the corresponding discrete wavefronts at the time steps, namely, $W^n$ and $W^{n+1}$.
We form triangles in $\mathbb{R}^2$ using the position of the rays in $W^n$ and $W^{n+1}$ and check whether these triangles contains $x_K$.
More precisely, we write $W^n = \lbrace (x^n_j,p^n_j) \rbrace$ and consider the triangles $x^n_j x^{n+1}_{j^*} x^{n+1}_{j^*+\ell +1}$ for $\ell \leq (j+1)^* -j^*-1$, and $x^n_j x^n_{j+1} x^{n+1}_{(j+1)^*}$.
We call each of these triangles a \emph{ray~cell}.

Whenever $x_K$ lies in one of the ray cells, we approximate the ray direction at $x_K$ by 
linear interpolation.
Suppose the rays corresponding to the three vertices of the ray cell are given by $r_1, r_2, r_3$. Then we approximate the ray direction by
$$\mathcal{I}(r_1, r_2, r_3; x_K) = \lambda_1 p_1 + \lambda_2 p_2 + \lambda_3 p_3,$$
where $\lambda_{\ell} \in \mathbb{R}$ is the barycentric coordinates of $x_K$ in the triangle $x_1\, x_2\, x_3$, $x_{\ell}$ and $p_{\ell}$ are the positions and ray directions of the rays $r_{\ell}$ respectively for $\ell = 1,2,3$.
The barycentric coordinates can be obtained by solving
$$
x_K = \lambda_1 x_1 + \lambda_2 x_2 + \lambda_3 x_3 \mbox{ and } \lambda_1+\lambda_2+\lambda_3 = 1.
$$
We name the set of these approximations to the ray directions at $x_K$ by $\widetilde{\Theta}_K^*$.
%
%
%

\begin{figure}[h]
\centering
\begin{overpic}[width=4in]{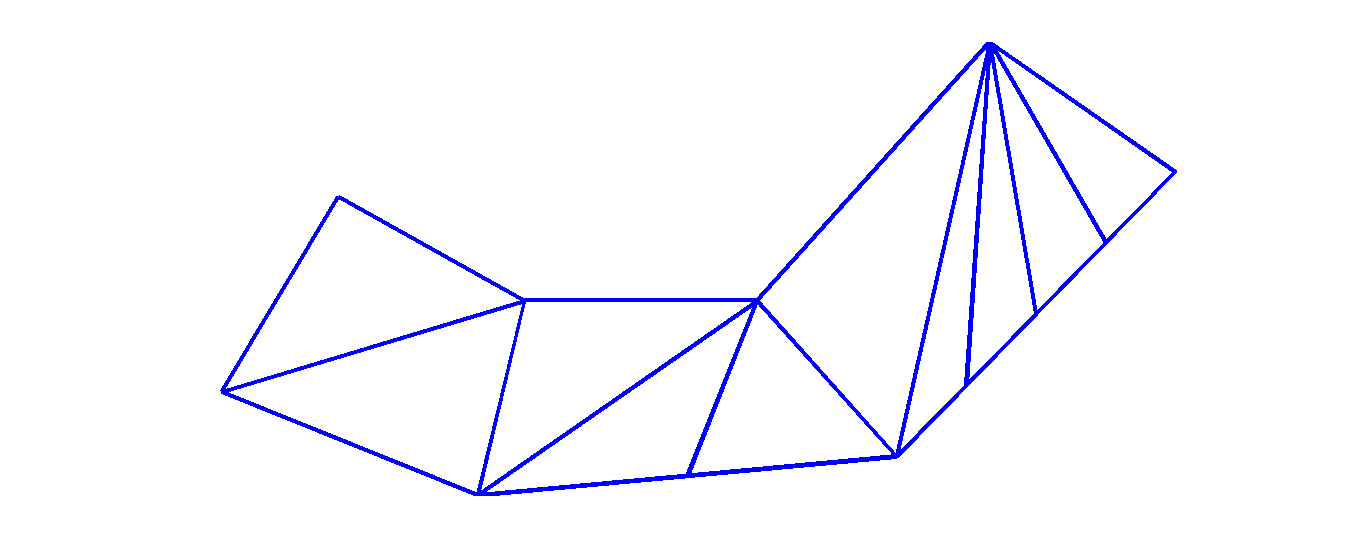}
\put (20,30) {\Large $4$}
\put (40,22) {\Large $3$}
\put (52,22) {\Large $2$}
\put (77,38) {\Large $1$}
\put (10,10) {\Large $4^*$}
\put (35,-1) {\Large $3^*$}
\put (65,2) {\Large $2^*$}
\put (90,28) {\Large $1^*$}
\end{overpic}
\caption{An example of ray cells.
In this figure, we show the position of rays on a wavefront in the physical space at two consecutive time steps, where
the rays $1$--$4$ propagate to the rays $1^*$--$4^*$ at the next time step.
We insert some rays between $1^*$ and $2^*$, and $2^*$ and $3^*$.}
\label{raycell}
\end{figure}
\begin{algorithm}[h]
\caption{Phase determination}
\begin{algorithmic}[1]
\Procedure{$\lbrace \widetilde{\Theta}^*_{K} \rbrace =$ PhaseDet}{$\lbrace r^n_j\rbrace, \lbrace r^{n+1}_j\rbrace, \lbrace I_j \rbrace, \lbrace x_K\rbrace$,
			$\lbrace \widetilde{\Theta}^*_K \rbrace$}
\State $j \gets 1$
\For{ $i \gets 1 $ \textbf{to}  $(\mbox{size of } \lbrace r^n_j \rbrace)-1$}
	\State $j = 1$
	\While{ $I_j + 1 == i$}
		\State $R^n_{j} \gets (r^n_i, r^{n+1}_{j}, r^{n+1}_{j+1})$ \Comment{Form ray cells}
		\State $j \gets j +1$
	\EndWhile
	\State $R^n_{j} \gets (r^n_i, r^n_{i+1}, r^{n+1}_{j})$ 
	\ForAll{ $x_K, R^n_{\ell}$ }
		\State $\left(r_1, r_2, r_3\right) \gets R^n_{\ell}$
		\State $x_{\ell} \gets $ position of $r_{\ell}$, for $\ell = 1,2,3$
		\If{$x_K \in x_1\, x_2\, x_3$}
			\State $\widetilde{\Theta}^*_{K} \gets \widetilde{\Theta}^*_{K} \cup \lbrace \mathcal{I}(r_1, r_2, r_3; x_K) \rbrace$ \Comment{Append phase}
		\EndIf
	\EndFor
\EndFor
\EndProcedure
\end{algorithmic}
\end{algorithm}
\newpage
Note that the time complexity of Algorithm 3 is $\bigO{mn}$ for $n$ observation points and $m$ ray cells.
However, it is possible to reduce the time complexity by using some heuristics.
Supposing that the ray cells are adequately small in size and regular in shape,
we may assume {that} an observation point $x_K$ lies in a ray cell only if all the vertices of a ray cell lies in $K$.
Then if one of the vertices of a ray cell is $x$, the only possible observation point that lies in the ray cell is given by
$\left( \frac{\lfloor Nx \rfloor+\frac{1}{2}}{N}, \frac{\lfloor Ny \rfloor+\frac{1}{2}}{N} \right)$, reducing the {time} complexity to $\bigO{m}$.
Similarly, one can reduce the time complexity of Algorithm 3 for a non-rectangular computational domain to $\bigO{m}$
whenever there is a region classifier, which maps a points to a region, of time complexity $\bigO{1}$.

\subsection{Ray separation}
\label{sec:separation}
In this procedure, we find dominant ray directions in each $\widetilde{\Theta}^*_{K}$
after all the approximate wavefronts have propagated till time $T$.
The aim of this procedure is to obtain a set $\widetilde{\Theta}_{K}$ such that the deviation
$d(\widetilde{\Theta}_{K}, \widetilde{\Theta}^*_{K})$ is not too large while
the minimum distance between any two distinct elements in $\widetilde{\Theta}^*_{K}$, or namely the separability, is not too small.
We will see that using the new set of ray directions $\widetilde{\Theta}_{K}$ will improve the numerical stability
in the IPDG formulation, while the approximation power of the new basis do not deteriorate significantly.

We use a parameter $\varepsilon > 0$ to control the deviation and the separability.
The idea of this procedure is to cover the set $\widetilde{\Theta}^*_{K}$ by some balls with radius $\varepsilon$,
and make sure the centers of the balls are far enough from each other. We take the {dominant ray directions} $\widetilde{\Theta}_K$ by the centers of these balls.
We will also need a predefined set $\Thetadef$ to be included in 
$\widetilde{\Theta}_K$ obtained from this procedure.
One usage of this predefined set is to include the ray directions corresponding to the initial solutions.

We illustrate this procedure in Figure~\ref{fig_phase_sep}.
Let $B_{\varepsilon}(x_0)$ be the Euclidean ball with radius $\varepsilon$ centered at $x_0$.
To begin with, we take $\widetilde{\Theta}_{K}$ to be $\Thetadef$ and eliminate any ray direction in $\widetilde{\Theta}^*_{K}$
that also lies in $\Thetadef + B_{\varepsilon}(0)$.
Then we repeatedly do the following until $\widetilde{\Theta}^*_{K}$ is empty.
Firstly, we take a ray direction $p$ from $\widetilde{\Theta}_{K}$.
Secondly, we compute the centroid $\bar{q}$ of the set $\widetilde{\Theta}^*_{K} \cap B_{\varepsilon}(p)$, 
and include $\bar{q}$ in $\widetilde{\Theta}_{K}$.
Finally, we eliminate any ray direction in $\widetilde{\Theta}^*_{K}$ that also lies in $B_{\varepsilon}(\bar{q})$ and repeat.
The pseudocode is shown in Algorithm 4.

\begin{figure}[h!]
\centering
\includegraphics[height=5cm]{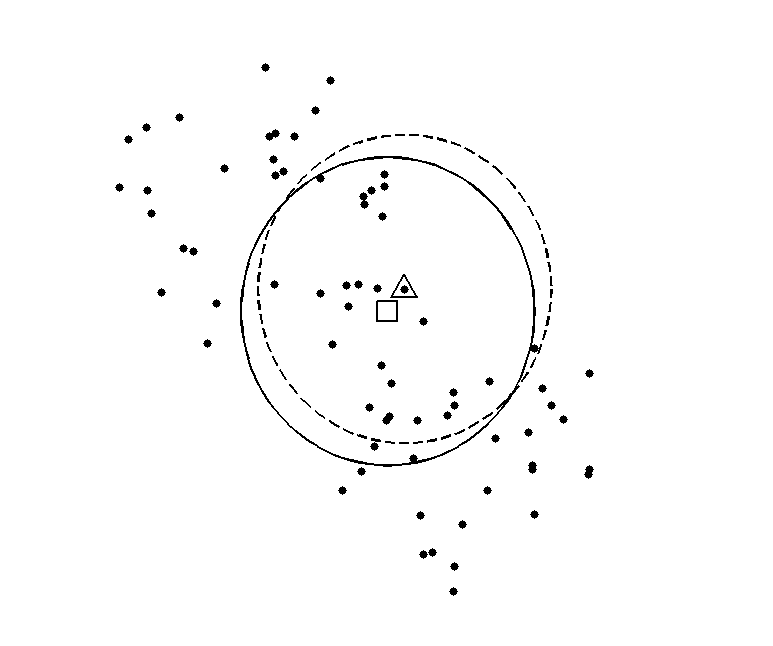}
\includegraphics[height=5cm]{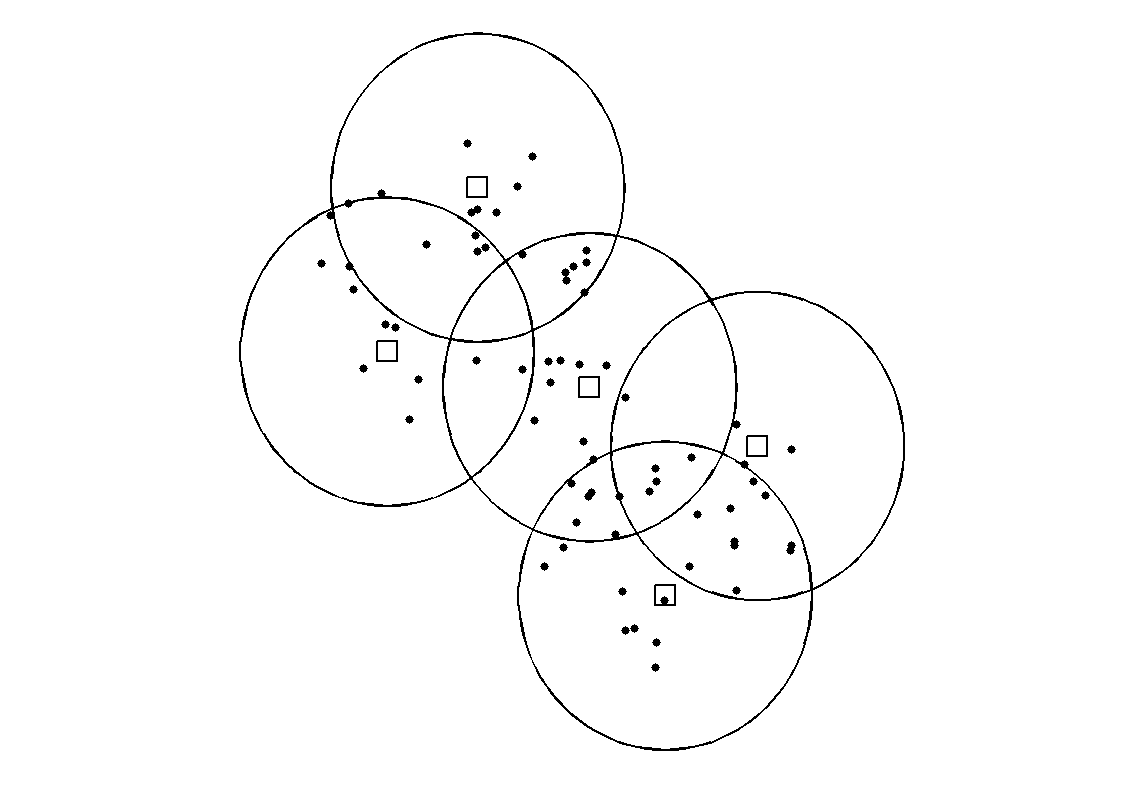}
\caption{
In the left figure, each dot represent a phase and the $\bigtriangleup$ is one of the phases.
The dotted circle is centered at the triangle with radius $\varepsilon$.
The $\square$ is the average of all dots in the dotted circle.
The solid circle is centered at $\square$ with radius $\varepsilon$.
Then we select a dot which does not lie in any solid circle and we repeat the above until such a point does not exist.
Finally we get the right figure.
Note that in this example any two $\square$ are at least $\varepsilon$ apart (in theory only $\varepsilon/2$ apart is achievable) and the distance between a point and the closest $\square$ is less than $\varepsilon$.
}
\label{fig_phase_sep}
\end{figure}

\begin{algorithm}[h]
\caption{Phases separation}
\begin{algorithmic}[1]
\Procedure{$\widetilde{\Theta}_K$ = PhaseSep}{$\widetilde{\Theta}^*_{K}$, $\Thetadef$, $\varepsilon$}
\State $\widetilde{\Theta}_{K} \gets \Thetadef$
\State $\widetilde{\Theta}^*_{K} \gets \widetilde{\Theta}^*_{K} \setminus (\Thetadef + B_{\varepsilon}(0))$
\While{$\widetilde{\Theta}^*_{K} \neq \emptyset$}
	\State $p \gets$ an element of $\widetilde{\Theta}^*_{K}$
	\State $\bar{q} \gets $ centroid of the set $\widetilde{\Theta}^*_{K} \cap B_{\varepsilon}(p)$ 
	\State $\widetilde{\Theta}_{K} \gets \widetilde{\Theta}_{K} \cup \lbrace \bar{q} \rbrace$
	\State $\widetilde{\Theta}^*_{K} \gets \widetilde{\Theta}^*_{K} \setminus B_{\varepsilon}(\bar{q})$
\EndWhile
\EndProcedure
\end{algorithmic}
\end{algorithm}

Next, we discuss some properties of the set $\widetilde{\Theta}_{K}$ obtained from Algorithm 4.
We say that a finite set $\Theta \subset \mathbb{R}^2$ is $\varepsilon$-separable if the distance between any two distinct elements in $\Theta$ is at least $\varepsilon$. The following theorem gives an estimate for the minimum separation of the set $\widetilde{\Theta}_{K}$.

\begin{thm}
If $\Thetadef$ is an $(\varepsilon/2)$-separable set, then the set $\widetilde{\Theta}_{K}$ obtained in Algorithm 4 is an
$(\varepsilon/{2})$-separable set.
\end{thm}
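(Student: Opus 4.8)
The plan is to reduce the whole statement to a single geometric lemma about centroids, and then dispatch the theorem by a short case analysis over pairs of elements of $\widetilde{\Theta}_K$. The elements of the final set are of two kinds: the original members of $\Thetadef$, and the centroids $\bar q$ appended inside the \textbf{while} loop of Algorithm 4. So I would check the three types of pairs separately. For two members of $\Thetadef$ there is nothing to do, since the hypothesis already gives distance $\geq\varepsilon/2$. The remaining two cases — an element $d\in\Thetadef$ against a centroid, and two centroids $\bar q_1,\bar q_2$ — I claim both reduce to the \emph{same} situation.

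Indeed, when a centroid $\bar q$ is formed from the cluster $\widetilde{\Theta}^*_K\cap B_\varepsilon(p)$, every point $y_i$ of that cluster is a survivor of all earlier elimination steps. In the case of $d\in\Thetadef$, line~3 of Algorithm 4 guarantees $\abs{y_i-d}\geq\varepsilon$ and, since the chosen $p$ is itself a survivor, $\abs{p-d}\geq\varepsilon$; in the case of an earlier centroid $\bar q_1$, the elimination of $B_\varepsilon(\bar q_1)$ in line~8 guarantees $\abs{y_i-\bar q_1}\geq\varepsilon$ and $\abs{p-\bar q_1}\geq\varepsilon$. Together with the defining property $\abs{y_i-p}\leq\varepsilon$ of the cluster, both cases fit one template.

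The key step is therefore the following lemma: if $a,p\in\mathbb{R}^2$ satisfy $\abs{p-a}\geq\varepsilon$ and the points $y_1,\dots,y_m$ satisfy $\abs{y_i-a}\geq\varepsilon$ and $\abs{y_i-p}\leq\varepsilon$ for all $i$, then their centroid $\bar q=\frac1m\sum_i y_i$ obeys $\abs{\bar q-a}\geq\varepsilon/2$. I would prove this by projection. Translating so that $a=0$ and writing $u=p/\abs{p}$, expanding $\abs{y_i-p}^2\leq\varepsilon^2$ gives $2\,y_i\cdot p\geq \abs{y_i}^2+\abs{p}^2-\varepsilon^2$, and feeding in $\abs{y_i}\geq\varepsilon$ leaves $y_i\cdot p\geq\abs{p}^2/2$, i.e. $y_i\cdot u\geq\abs{p}/2\geq\varepsilon/2$ for every $i$. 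Averaging preserves the inequality, so $\abs{\bar q-a}\geq\bar q\cdot u\geq\varepsilon/2$, and the three cases above finish the theorem.

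The main obstacle is not the algebra but spotting the right geometry: the centroid of points lying merely outside $B_\varepsilon(a)$ can fall anywhere (two antipodal points on $\partial B_\varepsilon(a)$ average back to $a$), so the inequality is simply false without using the cluster constraint $\abs{y_i-p}\leq\varepsilon$. The crescent $B_\varepsilon(p)\setminus B_\varepsilon(a)$ with $\abs{p-a}=\varepsilon$ shows the bound $\varepsilon/2$ is sharp — its two horns sit on $\partial B_\varepsilon(a)$ and average to the point at distance exactly $\varepsilon/2$ from $a$ — which is precisely the ``$\varepsilon/2$ is achievable'' remark attached to Figure~\ref{fig_phase_sep}. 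Recognizing that projecting onto the direction $u=(p-a)/\abs{p-a}$, rather than trying to separate the two balls directly, is what makes the constant come out cleanly is the one genuinely non-routine step.
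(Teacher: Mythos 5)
Your proposal is correct and is essentially the paper's own argument: the paper normalizes so that the surviving reference point is the origin and $p=(q,0)^T$ lies on the positive $x$-axis, shows every cluster point lies in the half-plane $x>\varepsilon/2$, and invokes convexity to place the centroid there --- which is exactly your projection onto $u=(p-a)/\abs{p-a}$ followed by averaging. You spell out the case analysis over pairs (two defaults, default vs.\ centroid, two centroids) more explicitly than the paper's ``follows by translation and rotation,'' but the key geometric lemma and its proof coincide.
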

\begin{proof}
Consider $p = (q,0)^T$, $q>\varepsilon$, $\abs{p_j} > \varepsilon$ and $\abs{p_j-p} < \varepsilon$ for $j=1,\ldots, m$.
Then $p_j$ lies in the half-plane $H := \lbrace (x,y) : x>\varepsilon/2 \rbrace$. Since $H$ is convex,
the convex hull of $\lbrace p_j \rbrace_{j=1}^m$, which contains $\frac{1}{m}\sum p_j$, also lies in $H$.
The result follows by translation and rotation of the points in the above argument.
\end{proof}
Note that if we replace line 6 in Algorithm 4 by `$\bar{q} \gets p$', this modified algorithm produces an $\varepsilon$-separable set given that $\Thetadef$ is $\varepsilon$-separable.
Empirically, the set $\widetilde{\Theta}_{K}$ will {contain} more ray directions in this modified version while the minimum separation of the set of ray directions of {the} original version is close to $\varepsilon$.
\subsection{The ray-construction stage}
In this section we combine Algorithms 1--4 to determine
the set of dominant ray directions in each cell $K$ throughout the simulation, namely, $\widetilde{\Theta}_K$.
We give the pseudocode in Algorithm 5.
Here we describe the parameters required for this procedure.
The parameter $c$ and $T$ are the velocity function and the time of simulation defined in the problem \eqref{acoustic}, respectively;
the parameter $\Delta t$ is the time step in the wavefront propagation;
the parameter $\lbrace x_K \rbrace$ is the set of observation points in each cell;
the parameter $\mbox{tol}(x,p)$ is the tolerance function which controls how many points should be inserted between two neighboring rays in the wavefront reconstruction procedure;
the parameter $\lbrace \Thetadef \rbrace$ is the collection of the default set of ray directions used in the phase separation procedure;
the parameter $\lbrace r^0_j \rbrace$ is the collection of discrete wavefront corresponding to the initial ray directions;
and the parameter $\varepsilon$ controls the deviation and separability in the phase separation procedure.
The return sets $\widetilde{\Theta}_K$ are our desired dominant ray directions on $K$.

\begin{algorithm}[h!]
\caption{The phase-construction stage}
\begin{algorithmic}[1]
\Procedure{ $\lbrace \widetilde{\Theta}_{K}\rbrace=$ PhaseConstruction}{$c, T, \Delta t, \lbrace x_K\rbrace, \lbrace \Thetadef \rbrace, \lbrace\lbrace r^0_j \rbrace\rbrace, \mbox{tol}, \varepsilon$}
\ForAll{$K$}
	\State $\widetilde{\Theta}^*_{K} \gets \emptyset$
\EndFor
\ForAll{$\lbrace r^0_{j} \rbrace$}
	\For{ $n \gets 0, 2, \ldots, T/\Delta t-1$ }
		\State $\lbrace r^{(n+1)*}_j \rbrace \gets \textsc{WavefrontPropagate}(c, \Delta t, \lbrace r^{n}_j \rbrace)$
		\State $\lbrace r^{n+1}_j \rbrace, \lbrace I_j \rbrace \gets \textsc{WavefrontRecon}(\lbrace r^{(n+1)*}_j \rbrace,\mbox{tol})$
		\State $\lbrace\widetilde{\Theta}^*_{K}\rbrace \gets \textsc{PhaseDet}(\lbrace x_K \rbrace, \lbrace\widetilde{\Theta}^*_{K}\rbrace, \lbrace{I_j}\rbrace, \lbrace r^n_k \rbrace, \lbrace r^{n+1}_k \rbrace)$
	\EndFor
\EndFor
\ForAll{$K$}
	\State $\widetilde{\Theta}_{K} \gets \textsc{PhaseSep}(\Thetadef, \widetilde{\Theta}^*_{K}, \varepsilon)$
\EndFor
\EndProcedure
\end{algorithmic}
\end{algorithm}

\subsection{The time-marching stage}
In this stage, we obtain the solution of \eqref{acoustic} by solving an IPDG system as described in Section 2. The pseudocode of this stage is given in Algorithm 6. The basis is constructed based on the ray directions obtained in the ray-construction stage. The penalty parameter in the IPDG scheme is denoted by $\gamma$.

\begin{algorithm}[h]
\caption{The time-marching stage}
\begin{algorithmic}[1]
\Procedure{$u_h=$ IPDG-Solve}{$c, \omega, T, \Delta t, h, \lbrace\widetilde{\Theta}_K\rbrace, \gamma,  u^0, u^1$}
	\State $k = 0$
	\For{$i,j \gets 0,1,\dots 1/h-1$}
		\State $K \gets [ih, (i+1)h]\times[jh, (j+1)h]$
		\State $\lbrace \varphi_{j,K} \rbrace \gets \lbrace\mbox{bilinear polynomial basis on } K \rbrace$
	\ForAll{$p \in \widetilde{\Theta}_K,\, \varphi_{j,K}$}
		\State $x_K \gets $ center of $K$
		\State $\psi_k \gets \varphi_{j,K} e^{i\omega p \cdot (x-x_K)}$
		\State $k \gets k+1$
	\EndFor
	\EndFor
	\State $N \gets k$ 
	\For{$i,j = 1,\ldots,N$}
		\State $\mathbf{M}_{ij} \gets \int_{\Omega} \psi_j(x) \conj{\psi}_i(x) \, dx$
		\State $\mathbf{M}^c_{ij} \gets \int_{\Omega} \frac{1}{c(x)^2} \psi_j(x) \conj{\psi}_i(x) \, dx$
		\State $\mathbf{A}_{ij} \gets a^{\gamma}_h(\psi_j, \psi_i)$
	\EndFor
	\State $\mathbf{u}^0,\mathbf{u}^1 \gets \mathbf{M}^{-1} (\int_{\Omega} u^0 \psi_j \,dx)_{j=1}^{N},
			\mathbf{M}^{-1} (\int_{\Omega} u^1 \psi_j \,dx)_{j=1}^{N}$ \Comment{Projection of initial conditions}
	\For{$t \gets 2\Delta t,\, 3\Delta t,\,\ldots,\, T$}
		\State $\mathbf{u}^0,\mathbf{u}^1 \gets \mathbf{u}^1,
			2\mathbf{u}^1-\mathbf{u}^0-\Delta t^2 (\mathbf{M}^c)^{-1} \mathbf{A} \mathbf{u}^1$ \Comment{Time marching}
	\EndFor
	\State $u_h \gets \sum_{j=1}^N \mathbf{u}^1_j \psi_j$										
\EndProcedure
\end{algorithmic}
\end{algorithm}

We note that it is very time consuming for
computing the mass and the stiffness matrices: $$\mathbf{M}^c:= \left( \int_{\Omega} c^{-2}\,\psi_j(x) \conj{\phi}_i(x) \right) \quad\mbox{ and }\quad \mathbf{A} := \Big(a^{\gamma}_h(\psi_j, \psi_i) \Big).$$
Instead of using numerical quadratures, an alternative method is to use the following formula given by \cite{bakhvalov1968evaluation}:
\begin{eqnarray}
\int_{-1}^{1} P_k(x) e^{i\omega x}\, dx = i^k\left(\frac{2\pi}{\omega}\right)^{1/2} J_{k+\frac{1}{2}}(\omega), \label{intlegendre}
\end{eqnarray}
where $J$ is the Bessel function of the first kind and $P_k(x)$ is the Legendre polynomial of degree $k$.
By writing the integrands into a product of functions in $x$ and in $y$, all the nonzero entries of $\mathbf{A}$ can be computed by equation \eqref{intlegendre}.
For the matrix $\mathbf{M}^c$, if the mesh size $h$ is small enough such that $\rho(x,y) := 1/c^2(x,y)$ is approximately given by
$$
\rho|_{K} \approx \sum_{r} \sum_{s} \rho_{rs} P_r(x) P_s(y),
$$
then we can approximate the nonzero entries using \eqref{intlegendre} with an error estimate of the form
{
$$
\abs{ \int g(x) e^{i\omega x} dx - \int\sum {a_k P_k(x) e^{i\omega x} dx } } \leq \norm{g - \sum a_k P_k }_1.
$$}

\subsection{Online/offline scheme}
Suppose we need to solve the problem \eqref{acoustic} with a fixed velocity profile but many initial conditions.
From the constructions above, we see that the set of basis functions depends on the pre-defined sets $\lbrace \Thetadef \rbrace$,
which depend on the initial conditions (c.f. Section \ref{sec:separation}).
It is obvious that one does not want to compute the mass and the stiffness matrices for each initial condition. 
In this section, we propose an offline/online scheme to 
address this issue by pre-computing the entries of one mass and one stiffness matrix that 
can be used for all choices of initial conditions. 
That is, these mass and stiffness matrices are independent of the initial conditions. 

First of all, we define a predefined set of ray directions $\Thetapre$.
We assume that this set contains a large set of ray directions that are needed to compute the solution
for all initial conditions with good accuracy. 
Note that this set can be considered as a snapshot space using the terminology in \cite{gao2015generalized}. 
It is obvious that one can take $\Thetapre = \mathbb{R}^2 \backslash \{0\}$, but this choice is not good in practice
as it is too large and too expensive to use in the offline stage. Instead, 
we put a restriction on our solver such that it only handles the case that the ray directions lie in an annulus
$R = \lbrace p \in \mathbb{R}^2 : 0 < p_1 < p < p_2 \rbrace$.
Then we choose the set of predefined phases that satisfies
$R \subset	\Thetapre + B_{\delta/2}(0)$, for some small $\delta > 0$.
Next, we present the offline and the online stages. 
In the offline stage, we compute the matrices $\mathbf{M^c}$ and $\mathbf{A}$ in Algorithm 6 using the ray directions in $\Thetapre$.
We note that this computation is performed before the actual simulations and is independent of the initial conditions. 
We also note that the cost of this offline computation depends on the size of the set $\Thetapre$.

In the online stage, when the initial condition is given, 
we will apply a modification of the phase-construction procedure as in Algorithm 5 to obtain the desired ray directions. 
The modification is discussed as follows.
The main idea is that, we will construct the dominant ray directions as before,
but we will use the ray directions in the set $\Thetapre$ that are closest to those dominant ray directions as the basis functions. 
The advantage is that we can extract sub-matrices of $\mathbf{M^c}$ and $\mathbf{A}$ from the offline stage
for the simulations. 
Next, we present in detail how this is performed. 
Firstly, we apply the original phase separation step with $\varepsilon$ replaced by $\varepsilon + 2\delta$.
$$
\widetilde{\Theta}^{**}_{K} \gets \textsc{PhaseSep}(\Thetadef, \widetilde{\Theta}^*_{K}, \varepsilon+2\delta).
$$
Secondly, for each ray direction in $\widetilde{\Theta}^{**}_{K}$, we choose the closest ray direction in $\Thetapre \cup \Thetadef$ to form the set $\widetilde{\Theta}_K$. More precisely we take 
$$
\widetilde{\Theta}_{K} \gets \Thetadef \cup \left\lbrace
\underset{p \in \Thetapre \cup \Thetadef}{\mbox{arg\,min}} d(p,q): q \in \widetilde{\Theta}^{**}_{K} \right\rbrace
$$
Given that $d(\widetilde{\Theta}^{**}_{K}, \Thetapre) < \delta/2$, the set $\widetilde{\Theta}_{K}$ is an $(\varepsilon/2)$-separable set such that
$d(\widetilde{\Theta}_{K}, \widetilde{\Theta}^*_{K}) < \varepsilon + 3\delta$.
After we obtain this new $\widetilde{\Theta}_{K}$, we retrieve the corresponding entries from the precomputed $\mathbf{M}^c$ and $\mathbf{A}$. Besides,
we only need to compute the entries corresponding to $\Thetadef \setminus \Thetapre$ in the online stage.
Afterwards, we solve the IPDG scheme as in Algorithm 6.

\subsection{Remarks on computation issues}
\subsubsection{Parallel computation for the ray-construction stage}
Suppose we have multiple processing units available for simultaneous calculations.
We note that the ray direction and position of a ray do not depend on other rays but only the velocity field.
In other words, the computation for the rays can be carried out with duplicated velocity field on each processing unit.
Suppose we split a discrete wavefront $W^n$ into smaller wavefronts (with overlapping endpoints), solve the ray equations for each smaller wavefront, apply the reconstruction \textsc{WavefrontRecon}, and combine the reconstructed wavefronts into one wavefront, the resulting wavefront is the same as the discrete wavefront $W^{n+1}$ describe in section 3.2.
Therefore if we modify Algorithm 5 and perform the procedures \textsc{WavefrontPropagate}, \textsc{WavefrontRecon} and \textsc{PhaseDet}
for each smaller wavefronts on different processing units, the resulting set of phases
$\widetilde{\Theta}_K$ would be the same after performing the \textsc{PhaseSep} procedure.
The speedup of this parallel algorithm depends on the maximum number of rays on each wavefront throughout the simulation,
which may not be clear before the computation.
How to choose the initial splitting of the wavefronts remains a question.

\subsubsection{Conditioning of the IPDG system}
\label{sec:pod}
In this part, we consider the conditioning of the mass matrix. 
As an illustration, we consider $K = [0,h]^2$, $\widetilde{\Theta}_K = \lbrace (1,0)^T, (\cos\theta, \sin\theta)^T\rbrace$, and the local approximation space $V(\widetilde{\Theta}_K)$.
In Figure 3, for each mesh size $h$, we compute the condition number of the mass matrix corresponding to the basis $\varphi_{j,K} e^{i p \cdot (x-x_0)}$, where $\varphi_{j,K}$ is a standard bilinear basis, $p \in \widetilde{\Theta}_K$, and $x_0$ is the center of $K$.
The figure suggests that the resulting system is very ill-conditioned for small $h$ and $\theta\approx 0$.
\begin{figure}[h]
\centering
\includegraphics[width=7cm]{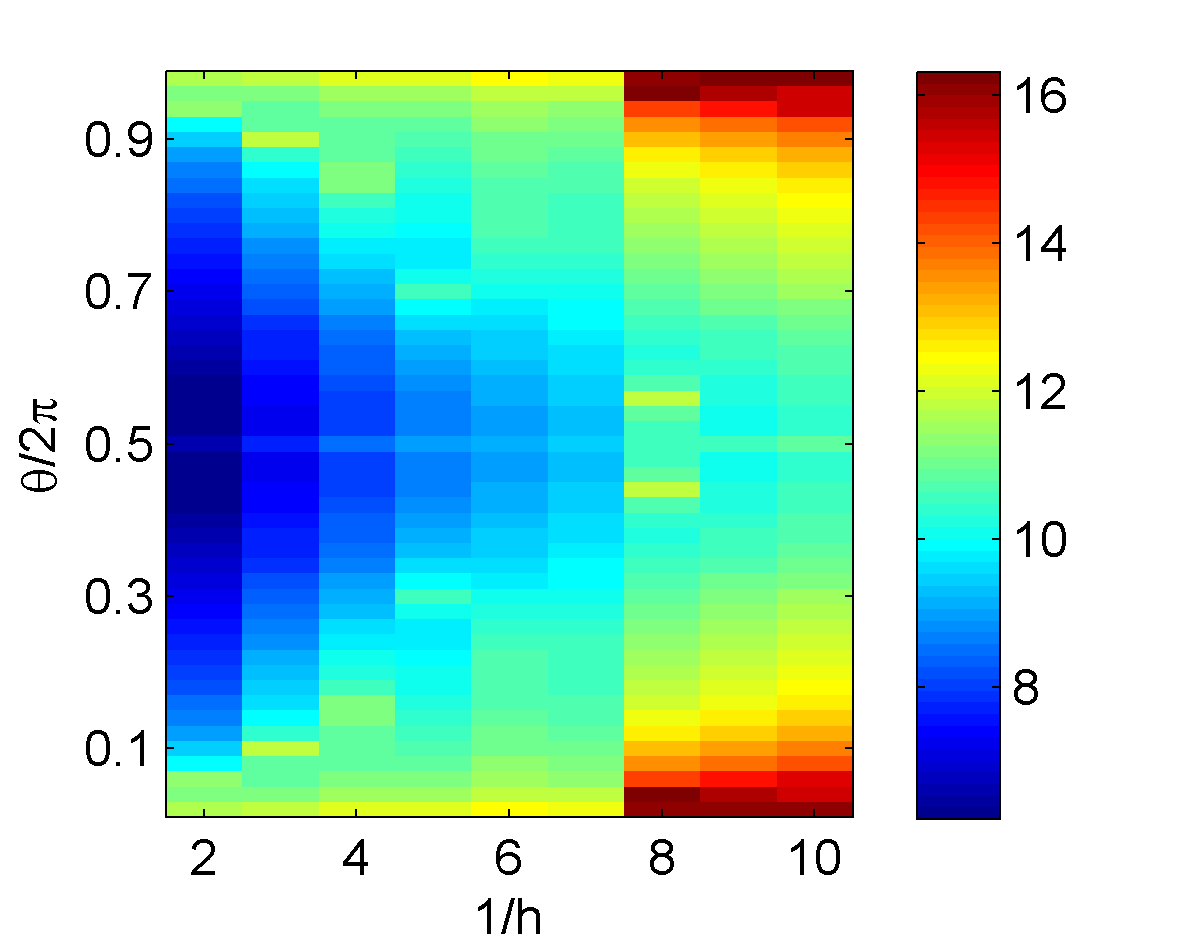}
\caption{
This figure shows the condition number (in $\log_{10}$) of the mass matrix corresponding to the phases $(1,0)^T$ and $(\cos\theta, \sin\theta)^T$ on a tile with side length $h$.}
\end{figure}

To form a better conditioned system, in addition to choosing a larger $\varepsilon$,
we also consider the proper orthogonal decomposition (POD) basis for the IPDG system \cite{kunisch2001galerkin, kunisch2002galerkin}.
The POD basis is obtained by considering a truncated singular value decomposition of the matrix
$\mathbf{M}^c = \left( \int \frac{1}{c^2} \psi_i \psi_j\, dx \right)_{ij} $, where the $\psi$'s are the basis in $V(\widetilde{\Theta}_K)$.
Suppose the eigenvalues of $\mathbf{M}^c$ are given by 
$\lambda_1 \geq \lambda_2 \ldots \geq \lambda_N > 0$.
For a fixed $\eta$, we choose the minimum $N^*$ such that
$$\sum_{k=1}^{N^*} \lambda_k \geq 1-\eta.$$
Then the POD basis is given by the eigenvectors corresponding to $\lambda_1,\ldots, \lambda_{N^*}$.

\section{Numerical experiments}
\label{sec:num}
In this section, we present some numerical examples to show the performance of our proposed method.
For all the cases, the computational domain is $\Omega := [0,1]^2$, and $\Omega$ is subdivided into $N\times N$ square cells.
We define the mesh size $h$ to be $\frac{1}{N}$.
We simulate the equation \eqref{acoustic} with non-constant wave speed $c(x)$ for the times $0\leq t \leq 1 := T$
using the time step $\Delta t$.
The equation is supplemented with the periodic boundary condition and the initial condition
will be specified for each of the cases considered below. 
In all of our examples, we consider two different media with speeds given by
$$
c_1(x) = 1 +\frac{1}{5}\exp(-150+300(x_1+x_2)-240x_1x_2-180(x_1^2+x_2^2)),
$$
$$
c_2(x) = \frac{1}{5} \sin( 4\pi x_2 )+1.
$$
We illustrate these wave speed profiles in Figure~\ref{egwavespeed}.
\begin{figure}[h]
\centering
\includegraphics[width = 7cm]{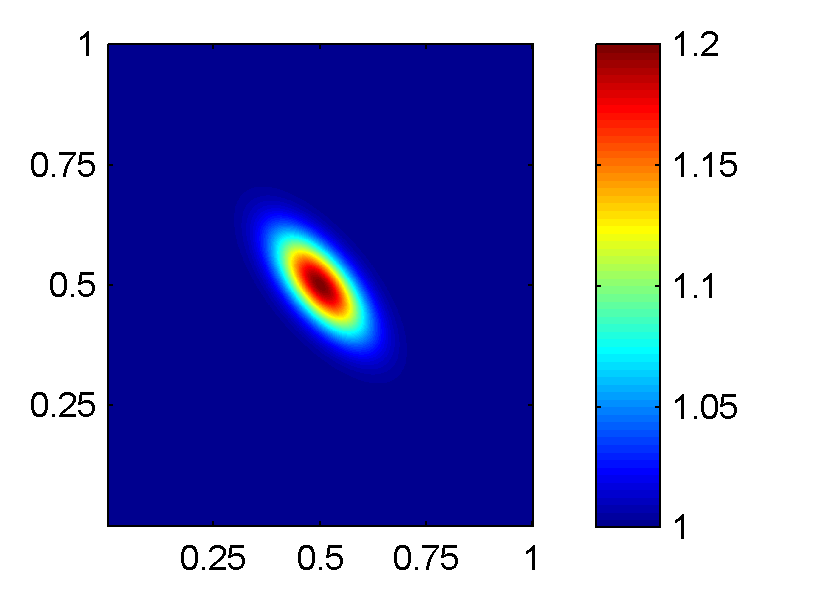}
\includegraphics[width = 7cm]{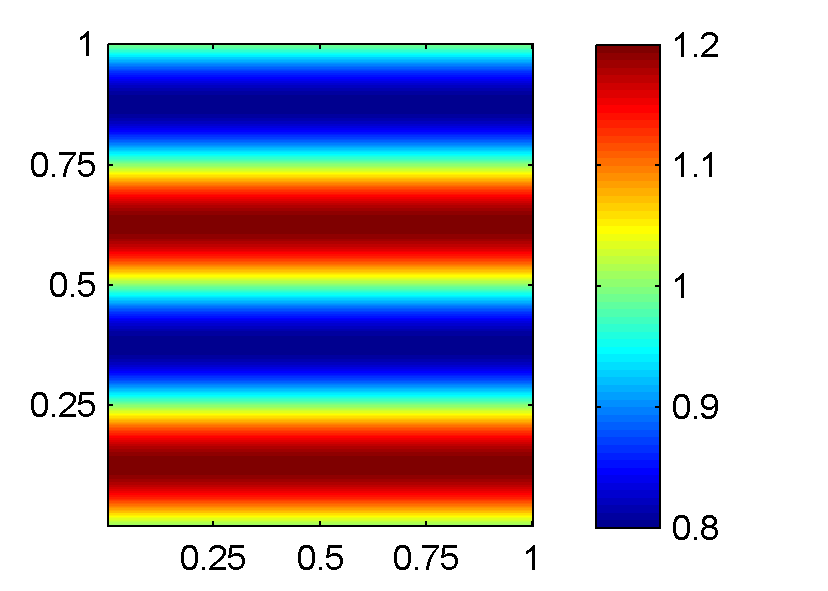}
\caption{The wave speed of the media. Left: $c_1$. Right: $c_2$. }
\label{egwavespeed}
\end{figure}
To benchmark the performance of our method, we will compare the numerical solutions computed by our method to a reference solution computed by a pseudospectral scheme
for a finer mesh. 

\subsection{Example 1}
In this example, we test our method without performing the POD described in Section \ref{sec:pod}
and compare it to the standard IPDG method (with polynomial basis) as $\omega\to \infty$ but $\omega h = \bigO{1}$.
We consider the wave speed $c_1$.
For the initial conditions, we take $L=1$, the phase function $\phi_1(x) = x_1$
and the amplitude functions $A_1(x) = 1$ and $B_1(x,\omega) = -i\omega$. 
For the numerical solutions $u_h^0$ and $u_h^1$ at the first two time steps,
we can compute them by the given initial conditions and a forward Euler scheme in time respectively. 


Since $\nabla \phi_1(x) = (1,0)^T$ for all values of $x$,
in the phase-construction stage in Algorithm 5, we can take $\Thetadef = \lbrace(1,0)^T\rbrace$
for all cells $K$ in the partition of the domain. 
For the wavefronts, we will use $9$ level set functions defined by $\phi(x) = \beta$
with $\beta = 0.1, 0.2,\cdots, 0.9$.
In addition, we take the tolerance function $\mbox{tol}(x,p) = 10 \abs{x} + 100 \abs{p}$ and the separation parameter $\varepsilon = 0.2$.
For both solving the equations (\ref{eq:rayODE}) and 
the time marching stage in Algorithm 6, we take the time step $\Delta t = h/100$. Finally, we take the penalty parameter $\gamma = 10$.

We consider the numerical solution of our scheme using different mesh sizes $h$ and $\omega$ with $\omega h = \pi$. We list the relative errors in Table \ref{eg1tblerr}.
We also consider the numerical solutions given by standard IPDG scheme with the same time step size $\Delta t = h/100$ and penalty parameter $\gamma = 10$.
We take more degrees of freedoms and let $\omega h = \pi/10$. The numerical results are shown in Table~\ref{eg1tblpoly}.
\begin{table}[h]
\centering
\begin{tabular}{|c|c|c|c|c|c|}
\hline
$\omega$ & $1/h$ & $\omega h$ & \begin{tabular}{c}
Condition\\number \end{tabular}
& \begin{tabular}{c}Number of\\ dof\end{tabular}
& \begin{tabular}{c}Relative\\$L^2$ error (\%) \end{tabular} \\ \hline
$10\pi$ & 10 & \multirow{5}{*}{$\pi$} & 7.59e+07& 456 & 16.856 \\
$20\pi$& 20 & & 8.48e+07&1844& 14.339 \\
$40\pi$& 40 & & 1.08e+08&7376& 10.976 \\
$80\pi$& 80 & & 1.16e+08&29532& 10.128 \\
$160\pi$& 160 & & 1.33e+08&118184& 11.183 \\\hline
\end{tabular}
\caption{ In Example 1, the numerical results corresponding to our method.}
\label{eg1tblerr}
\end{table}
\begin{table}[h]
\centering
\begin{tabular}{|c|c|c|c|c|}
\hline
$\omega$ & $1/h$ & $\omega h$ & \begin{tabular}{cc}Number\\of dof\end{tabular} & \begin{tabular}{cc}Relative\\$L^2$ error (\%)\end{tabular} \\ \hline
$10\pi$ & 100 & \multirow{5}{*}{$\pi/10$} & 40000 & 20.11 \\
$20\pi$& 200 & & 160000 & 32.72 \\
$40\pi$& 400 & & 640000 & 57.83 \\
$80\pi$& 800 & & 2560000 & 105.00 \\
$160\pi$& 1600 & & 10240000 & 176.80 \\\hline
\end{tabular}
\caption{
In Example 1, the numerical results corresponding to the standard IPDG method with bilinear basis.}
\label{eg1tblpoly}
\end{table}

In these numerical results, we see that the relative $L^2$-errors of
the numerical solutions for our method
do not increase significantly as the frequency $\omega$ increases,
in contrast to the numerical solutions of the standard IPDG method with bilinear basis.
We also see that the condition numbers of the mass matrices without using POD are relatively large. 

In Figure~\ref{eg1ndof}, we show the number of ray directions we captured at each observation point $x_K$.
The number of rays in different regions of the domain suggests that
the phase-construction stage is able to capture the approximate ray directions in a consistent manner.
Also, there are maximum 2 phases at each observation point or 8 degrees of freedom in a tile.
Therefore, there are $2\times\sqrt{8} \approx 5.66$  points per wavelength for the numerical solutions of our method.

We also demonstrate the quality of the solution by comparing our solution to the reference solutions in Figure~\ref{eg1ps} and \ref{eg1dg}.
In Figure~\ref{eg1diff}, we compute the absolute difference between our solutions and the reference solutions.
In Figure~\ref{eg1waveform}, we compare the real parts of our solutions to the reference solutions at $T = 1$ and $x_2 = 0.3$.

\begin{figure}[p]
\centering
\includegraphics[width=5cm]{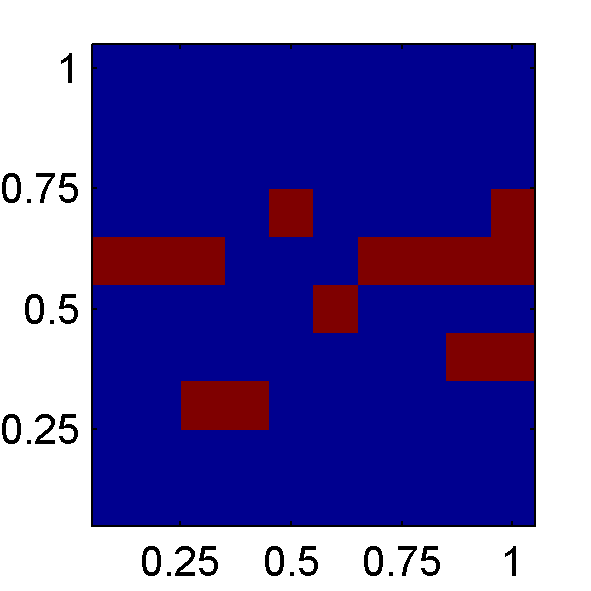}
\includegraphics[width=5cm]{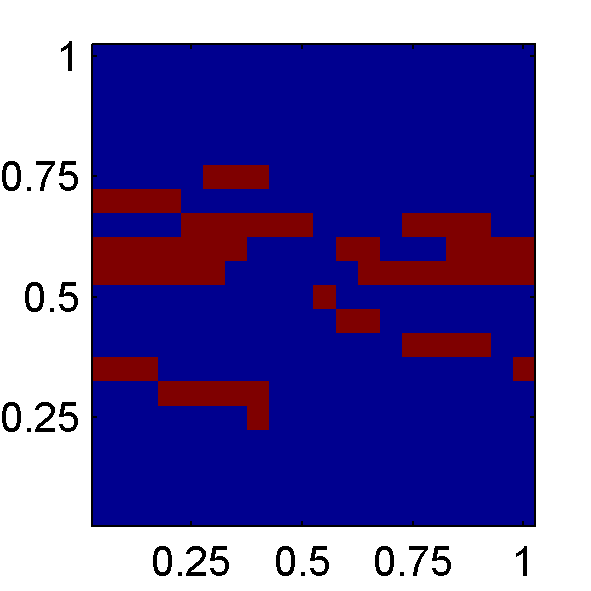}
\includegraphics[width=5cm]{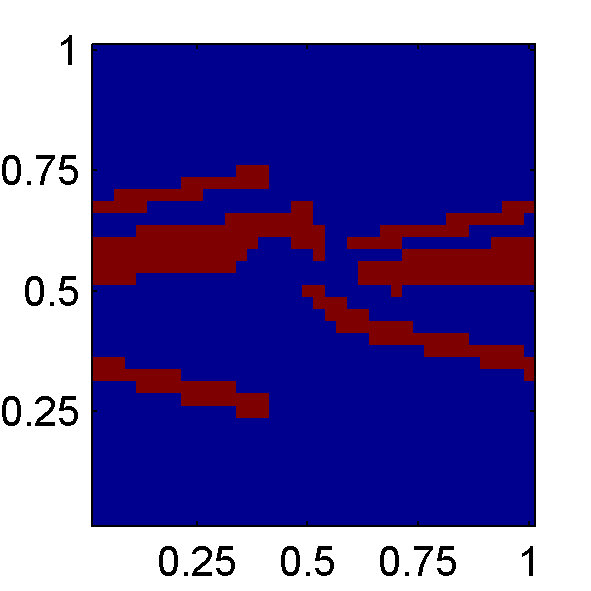}
\includegraphics[width=5cm]{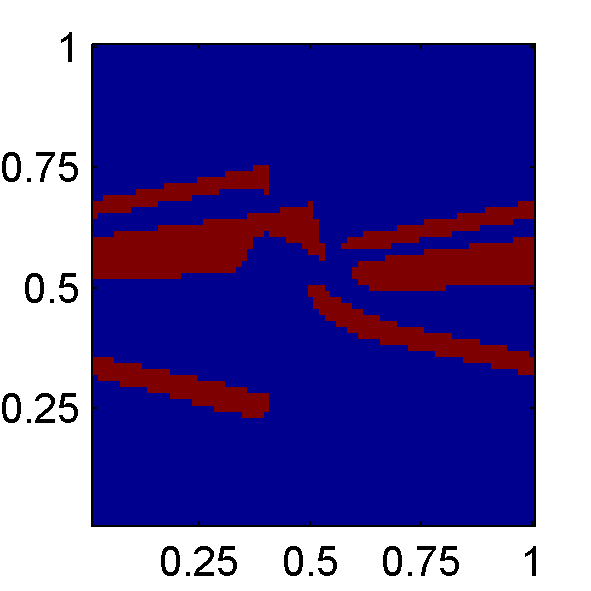}
\includegraphics[width=5cm]{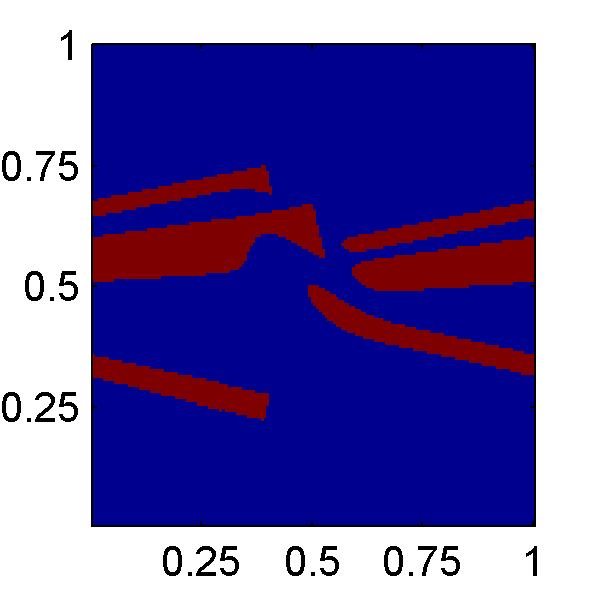}
\caption{
In Example 1, the number of phases obtained in the $\textsc{PhaseSep}$ procedure for each tile $K$, i.e. $\abs{\widetilde{\Theta}_{K}}$.
These figures are corresponding to $1/h = 10, 20, 40, 80$ and $120$, respectively.
Each tile in these figures represent the number of phase (Blue: 1, Red: 2).
The horizontal and vertical axis represent the $x_1$ and $x_2$ coordinates.}
\label{eg1ndof}
\end{figure}
\begin{figure}[p]
\centering
\includegraphics[height=5cm]{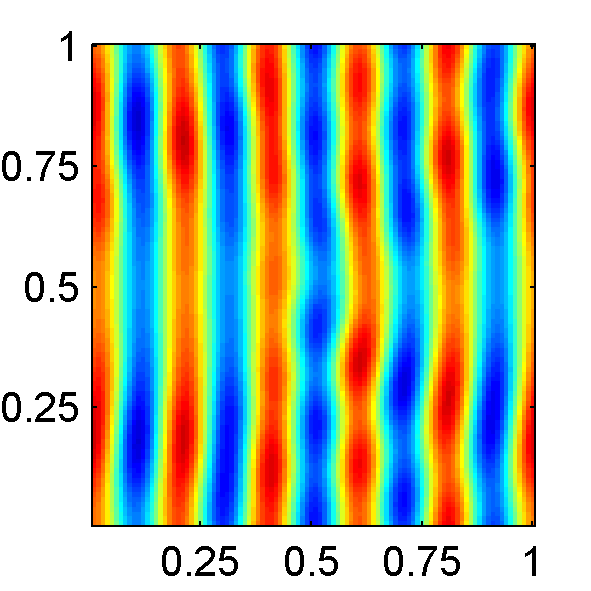}
\includegraphics[height=5cm]{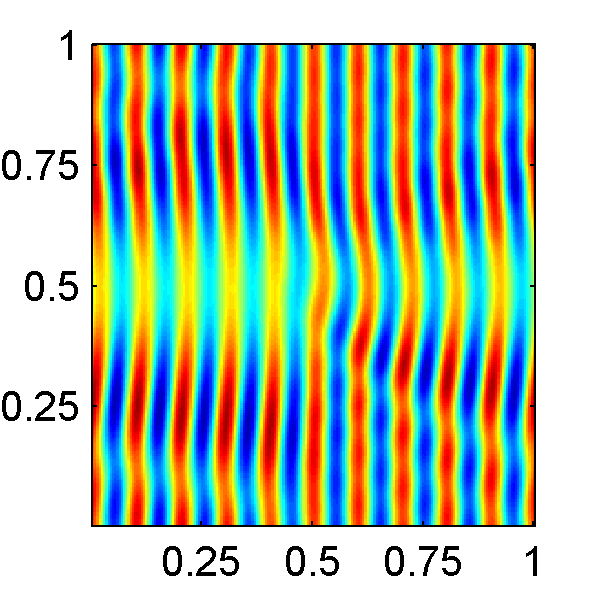}
\includegraphics[height=5cm]{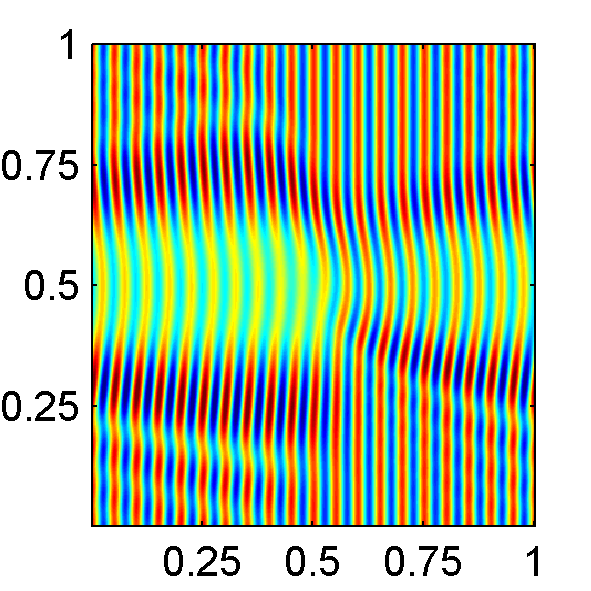}
\includegraphics[height=5cm]{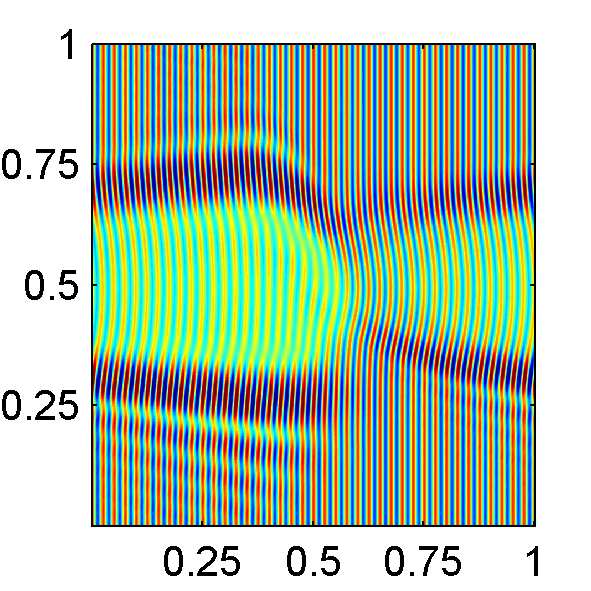}
\includegraphics[height=5cm]{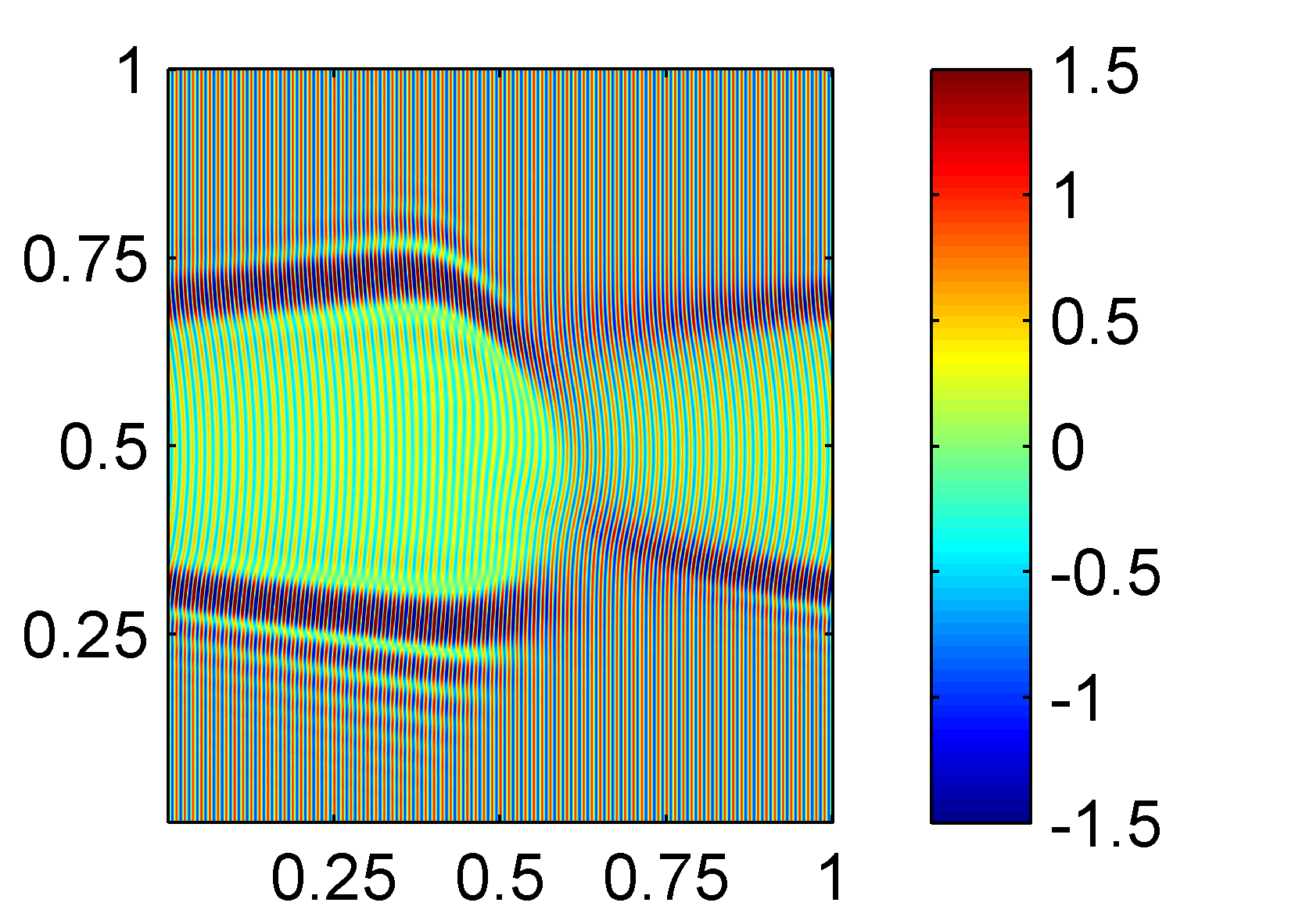}
\caption{ In Example 1, the real part of the reference solutions for $1/h = 10, 20, 40, 80$ and $120$, respectively. The horizontal and vertical axis represent the $x_1$ and $x_2$ coordinates. }
\label{eg1ps}
\end{figure}
\begin{figure}[p]
\centering
\includegraphics[height=5cm]{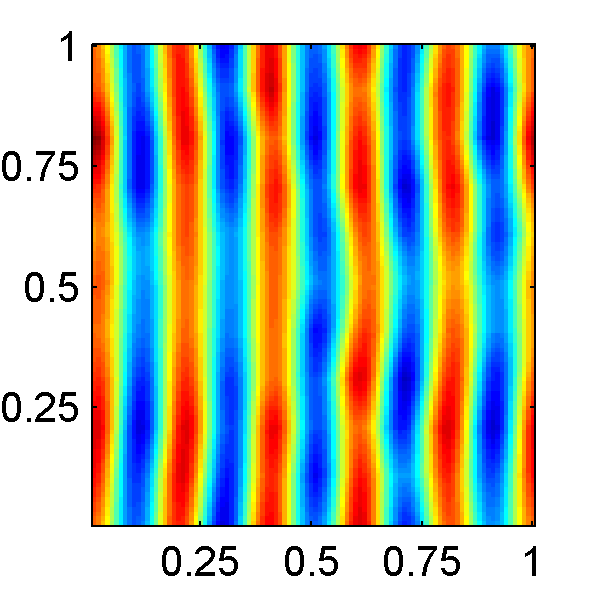}
\includegraphics[height=5cm]{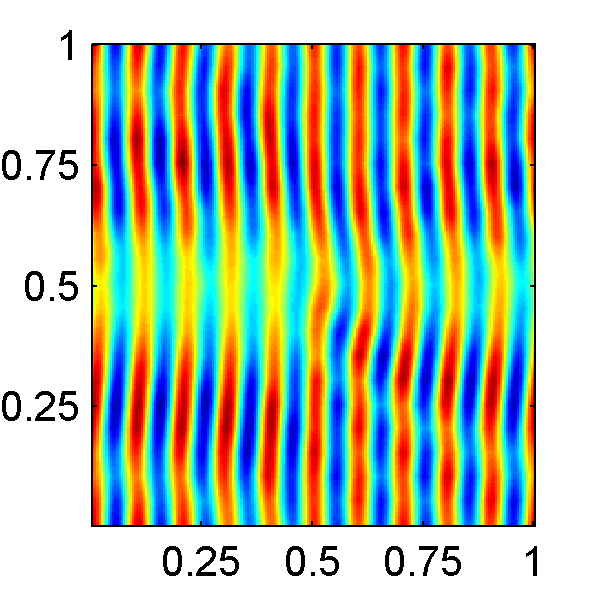}
\includegraphics[height=5cm]{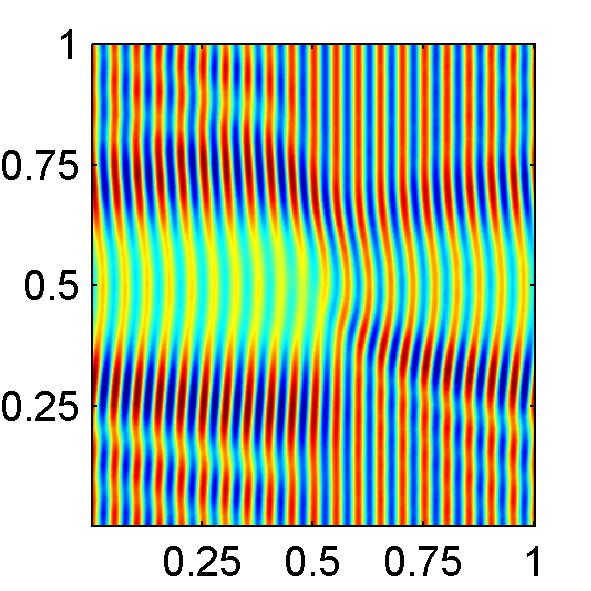}
\includegraphics[height=5cm]{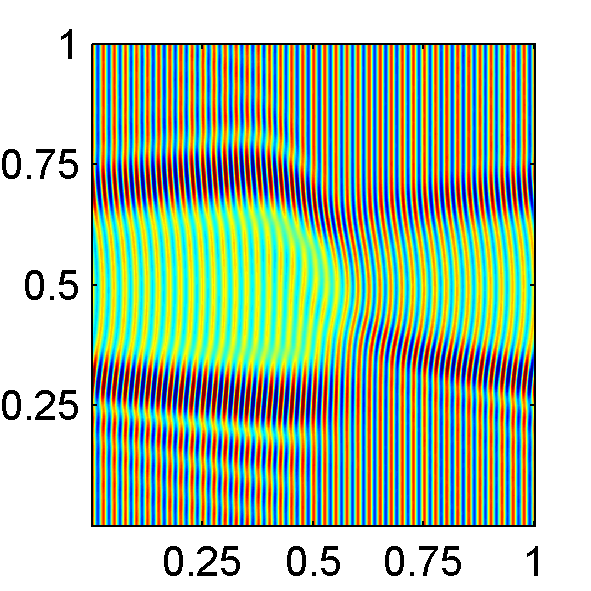}
\includegraphics[height=5cm]{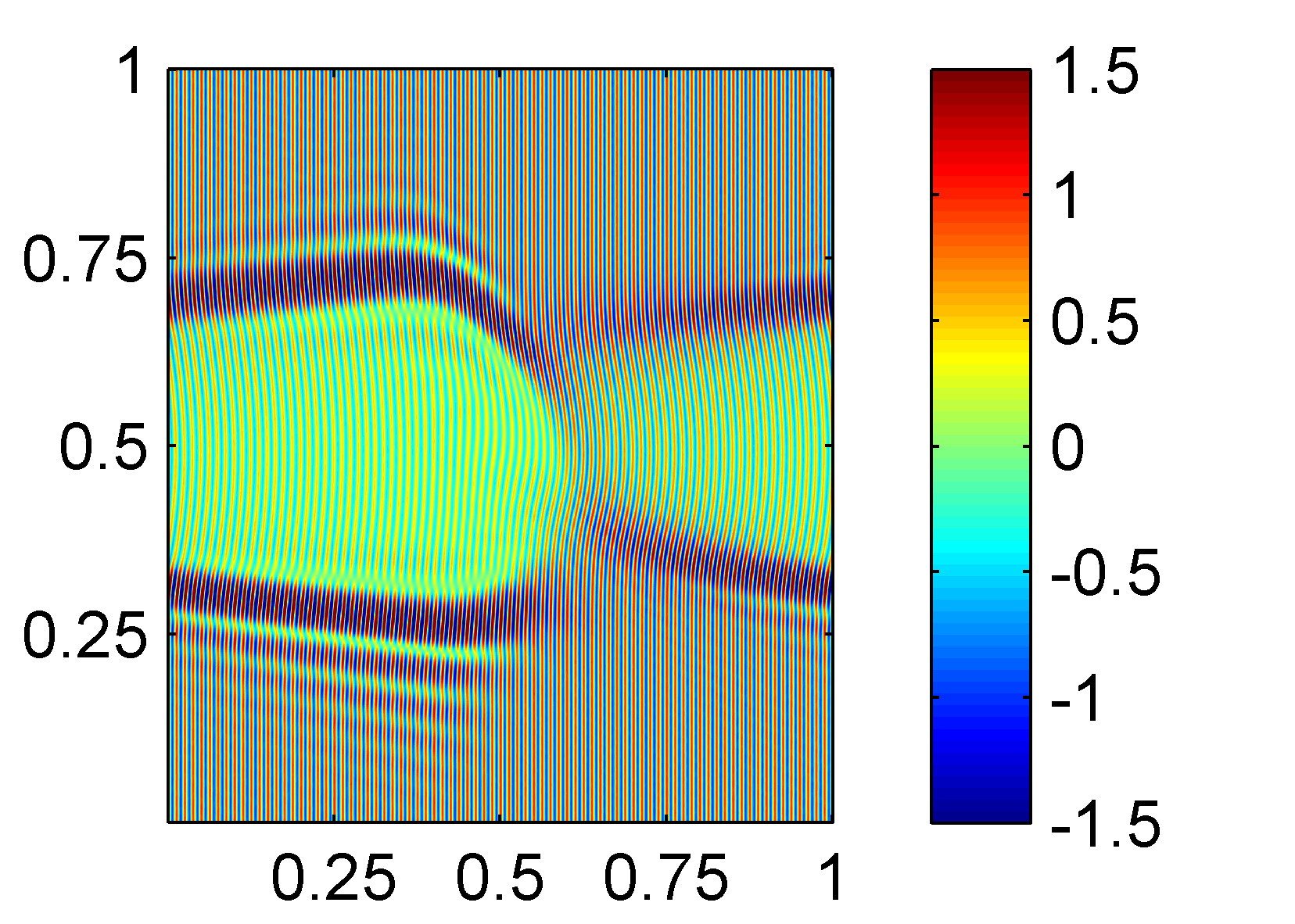}
\caption{ 
In Example 1, the
real part of the solutions obtained by our method for $1/h = 10, 20, 40, 80$ and $120$, respectively. The horizontal and vertical axis represent the $x_1$ and $x_2$ coordinates.}
\label{eg1dg}
\end{figure}
\begin{figure}[p]
\centering
\includegraphics[height=5cm]{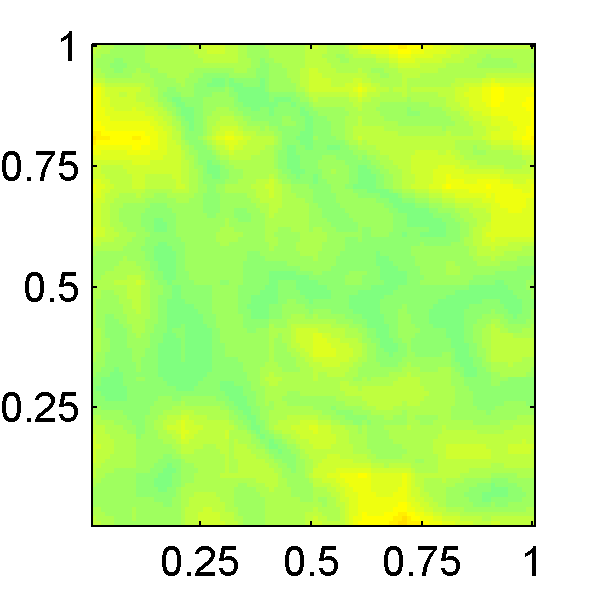}
\includegraphics[height=5cm]{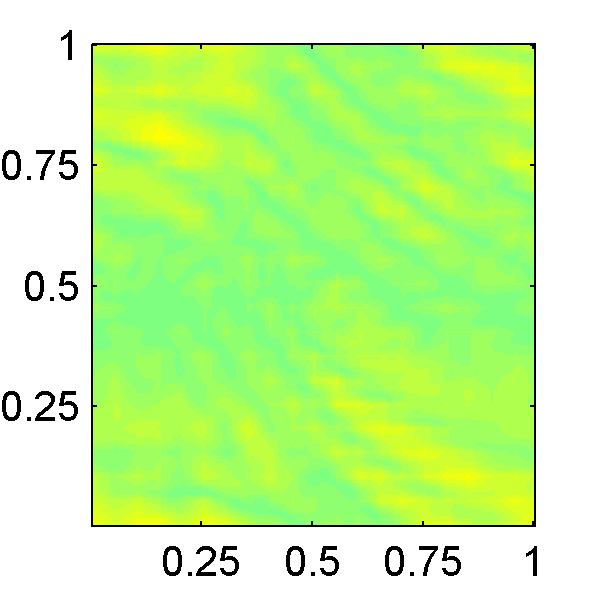}
\includegraphics[height=5cm]{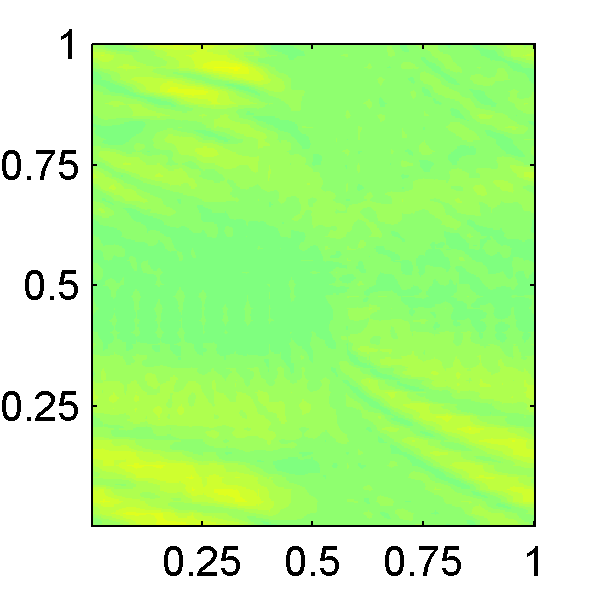}
\includegraphics[height=5cm]{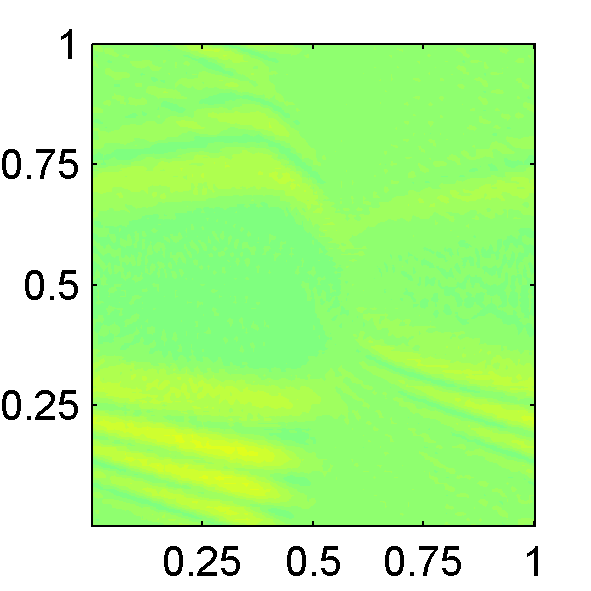}
\includegraphics[height=5cm]{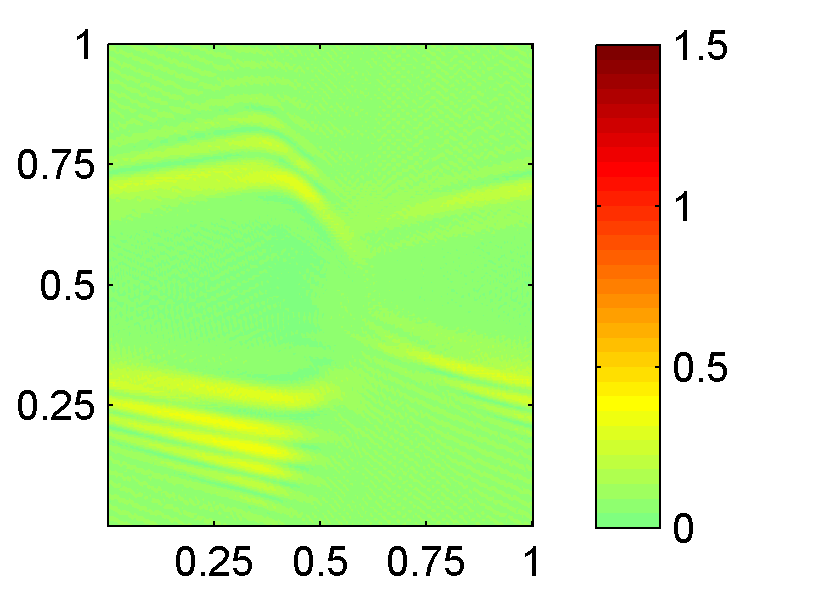}
\caption{In Example 1, the
absolute difference between the reference solution and the solutions obtained from our method for $1/h = 10, 20, 40, 80$ and $120$, respectively. The horizontal and vertical axis represent the $x_1$ and $x_2$ coordinates.}
\label{eg1diff}
\end{figure}
\begin{figure}[p]
\centering
\includegraphics[height=5cm]{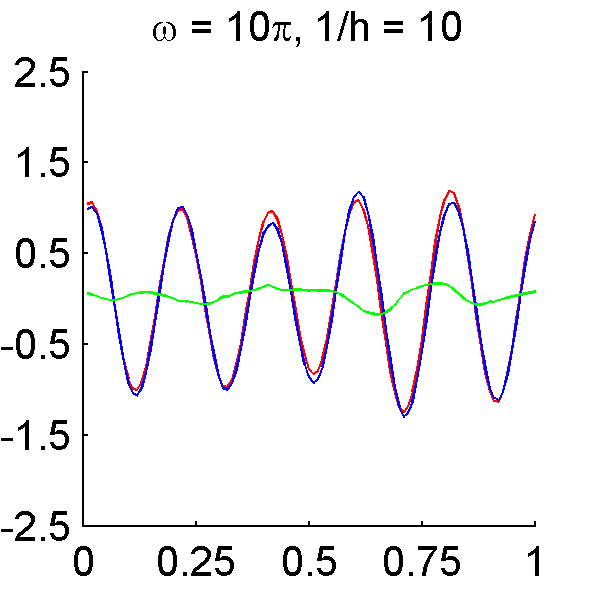}
\includegraphics[height=5cm]{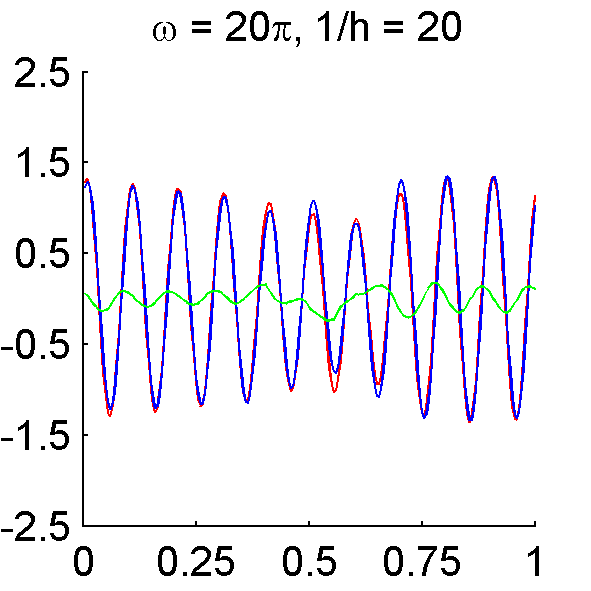}
\includegraphics[height=5cm]{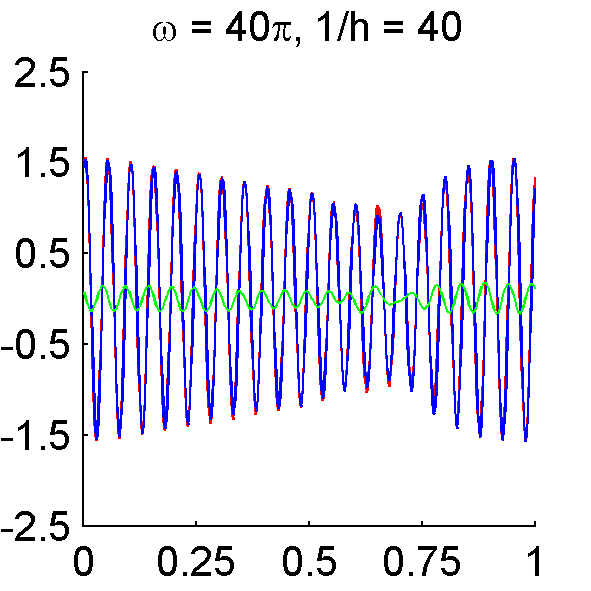}
\includegraphics[height=5cm]{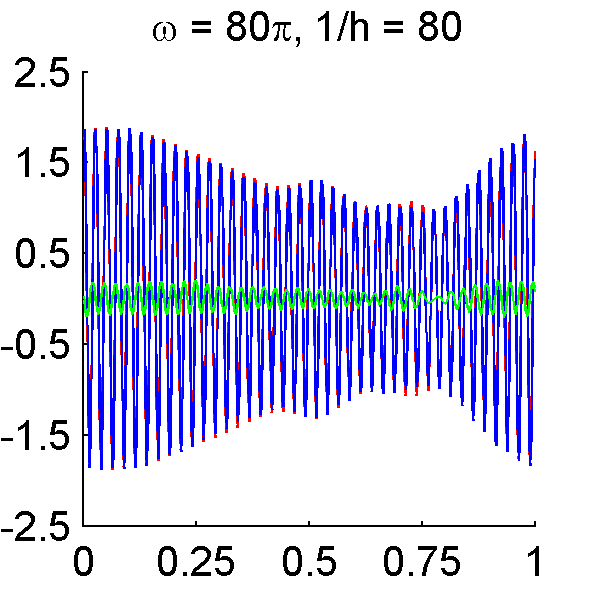}
\includegraphics[height=5cm]{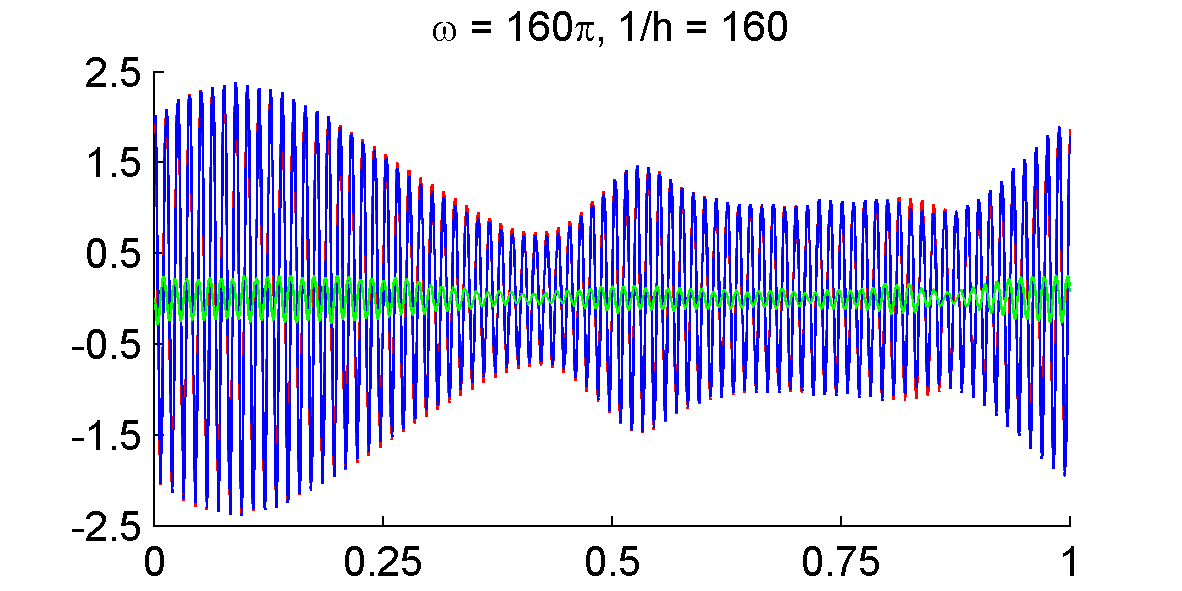}
\caption{
In Example 1, the real part of solutions obtained from our method (blue),
the reference solutions (red) and the difference between these solutions (green)
at $T=1$ and $x_2 = 0.3$
under different settings of mesh size $h$ and the parameter $\omega$.
The horizontal axis represent the $x_1$ coordinate.}
\label{eg1waveform}
\end{figure}
\subsection{Example 2}
In this example, we test the $h$-convergence of our method with the use POD basis presented in Section \ref{sec:pod}.
We consider the same wave speed, initial condition, initial wavefronts and parameters $\Thetadef, \Delta t, \mbox{tol}, \varepsilon, \gamma$ as in Example 1. Instead of solving the ill-conditioned system as $h\to 0$, we consider the POD system with parameter $\eta = 10^{-7}$.

In Table \ref{eg2tblerr} we list the relative errors of the approximate solutions.
We also compare the condition numbers and degrees of freedom of the POD systems to those of the original systems.
We observe that the POD systems has a better conditioning with number of degrees of freedom about the same with the original systems.
Our results suggests that the $h$-convergence of the POD systems is first-order.
Also, the relative $L^2$-error improves as the $\omega$ increases with fixed $\omega h$.

\begin{table}[h]
\centering
\begin{tabular}{|c|c|c|c|c|c|c|c|}
\hline
\multirow{2}{*}{$\omega$} &
\multirow{2}{*}{$1/h$} &
\multirow{2}{*}{$\omega h$} & 
\multicolumn{2}{|c|}{Condition number} &
\multicolumn{2}{|c|}{Number of dof} & Relative \\ \cline{4-7}
 &  &  & 
original & POD & original & POD
& $L^2$ error (\%)\\ \hline
\multirow{5}{*}{$10\pi$} & 10 & $\pi$ & 7.59e+07 & 4.95e+04 & 456 & 442 & 17.07\\
 & 20 & $\pi/2$ & 5.80e+09 & 6.87e+05 & 1844 & 1783 & 11.56 \\
 & 40 & $\pi/4$ & 3.52e+12 & 2.68e+06 & 7376& 7132 & 5.56 \\
 & 80 & $\pi/8$ & 6.89e+14 & 5.47e+06 & 29532& 28540 & 2.39 \\
 & 160 & $\pi/16$ & 5.65e+17 & 5.64e+06 & 118184& 113378 & 0.85\\\hline
 \multirow{4}{*}{$20\pi$} 
 & 20  & $\pi$    & 8.48e+07 & 5.86e+06 & 1844 & 1784 & 14.45\\
 & 40  & $\pi/2$  & 8.20e+09 & 6.71e+05 & 7376 & 7132 & 7.71\\
 & 80  & $\pi/4$  & 1.12e+12 & 2.70e+06 & 29532 & 28549 & 3.35\\
 & 160 & $\pi/8$  & 1.06e+14 & 5.59e+06 & 118184 & 114199 & 1.18\\\hline
 \multirow{3}{*}{$40\pi$} 
 & 40  & $\pi$    & 1.08e+08 &	5.81e+06 & 7376 & 7136 & 10.99\\
 & 80  & $\pi/2$  & 7.17e+09 & 6.76e+05 & 29532 & 28549 & 5.04\\
 & 160 & $\pi/4$  & 6.59e+11 & 2.92e+06 & 118184 & 114238 & 2.15 \\\hline
\end{tabular}
\caption{ In Example 2, the condition number and the number of degrees of freedom of the original system and the POD systems,
and the relative $L^2$ error under different settings of $\omega$ and meshsize $h$.
}
\label{eg2tblerr}
\end{table}
\subsection{Example 3}
In this example, we consider more phases in the initial condition and the 
solution. We will observe the behavior of our approximate solution as $\omega\to\infty$ while keeping $\omega h = \bigO{1}$, using the POD system.

We consider the wave speed $c_1$ in Example 1.
For the initial condition, we take $L=2$, $\phi_1(x) = x_1$, $A_1=1$, $B_1=-i\omega$,
$\phi_2(x) = x_2$, $A_2=1$ and $B_2 = -i\omega$.
For the phase-construction stage in Algorithm 5, since $\nabla\phi_1 = (1,0)^T$ and $\nabla\phi_2 = (0,1)^T$, 
we take $\Thetadef = \lbrace (1,0)^T, (0,1)^T\rbrace$ for every cell $K$ in the partition of the domain. 
For the wavefronts, we will use the level sets $\phi_1=\beta$ and $\phi_2=\beta$, where $\beta = 0.1, 0.2, \cdots, 0.9$.
All other parameters are taken the same as that for Example 2.

In Table \ref{eg3tblerr} we list the relative errors of the approximate solutions, and we see that our method is robust with respect to the frequency $\omega$.
We also compare the condition numbers and degrees of freedom of the POD systems to those of the original systems.
We observe that the POD systems has a better conditioning with number of degrees of freedom about the same with the original systems.
We see that the relative $L^2$-error of the numerical solution for the POD system does not increase significantly as the $\omega$ increase.

\begin{table}[h]
\centering
\begin{tabular}{|c|c|c|c|c|c|c|c|}
\hline
\multirow{2}{*}{$\omega$} &
\multirow{2}{*}{$1/h$} &
\multirow{2}{*}{$\omega h$} & 
\multicolumn{2}{|c|}{Condition number} &
\multicolumn{2}{|c|}{Number of dof} & Relative \\ \cline{4-7}
 &  &  & 
original & POD & original & POD
& $L^2$ error (\%)\\ \hline
$10 \pi$ & 10 & \multirow{5}{*}{$\pi$} & 1.12e+09 & 2.22e+06 & 912 & 884 & 15.09\\ 
$20 \pi$ & 20 & & 3.91e+08 & 4.23e+06 & 3688 & 3566 & 11.54 \\ 
$40 \pi$ & 40 & & 4.39e+09 & 4.43e+06 & 14752 & 14263 & 8.93 \\ 
$80 \pi$ & 80 & & 3.46e+10 & 4.56e+06 & 59064 & 57090 & 8.75 \\ 
$160 \pi$ & 160 &  & 8.07e+11 & 4.54e+06 & 236368 & 228443 & 9.75 \\ \hline
\end{tabular}
\caption{ In Example 3, the condition number and the number of degrees of freedom of the original system and the POD systems,
and the relative $L^2$ error under different settings of $\omega$ and meshsize $h$.}
\label{eg3tblerr}
\end{table}

In Figure~\ref{eg3ndof}, we show the number of phases we captured at each observation point $x_K$.
The number of rays in different region of the domain suggests that
the phase-construction stage capture the approximate phases in a consistent manner.
Also, there are maximum 4 phases at each observation point or 16 degrees of freedom in a tile.
Therefore there are $2\times\sqrt{16} = 8$ points per wavelength in the original systems of our method.

We also demonstrate the quality of the solution by comparing our solution to the reference solutions in Figure~\ref{eg3ps} and \ref{eg3dg}.
In Figure~\ref{eg3diff}, we compute the absolute difference between our solutions and the reference solutions.
In Figure~\ref{eg3waveform}, we compare the real parts of our solutions to the reference solutions at $T = 1$ and $x_2 = 0.3$.

\begin{figure}[p]
\centering
\includegraphics[width=5cm]{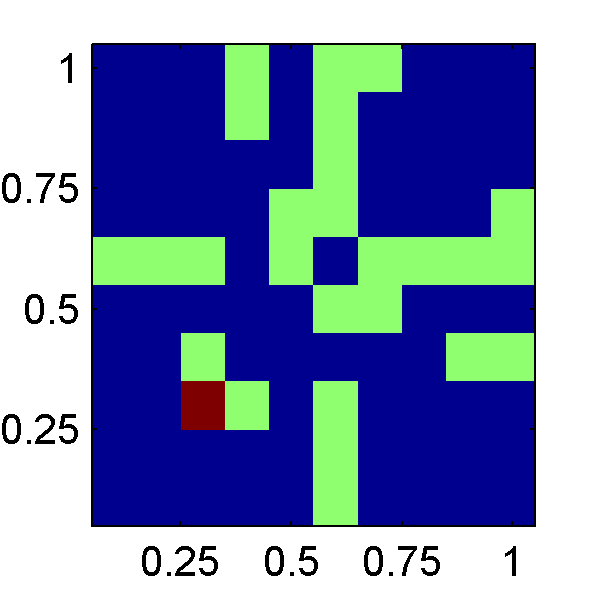}
\includegraphics[width=5cm]{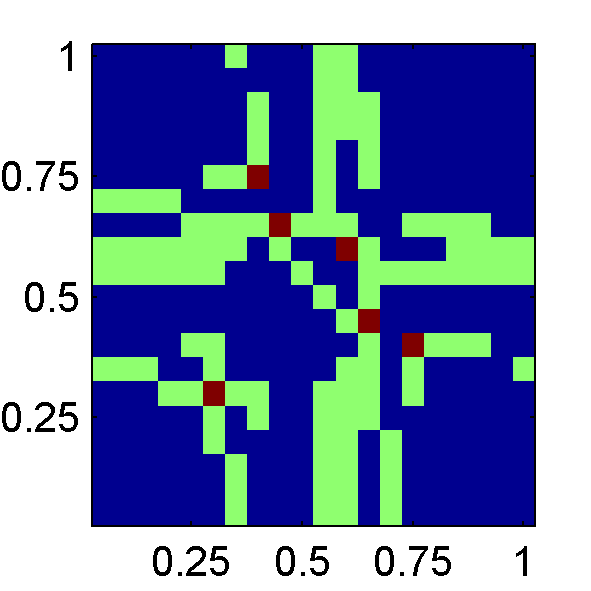}
\includegraphics[width=5cm]{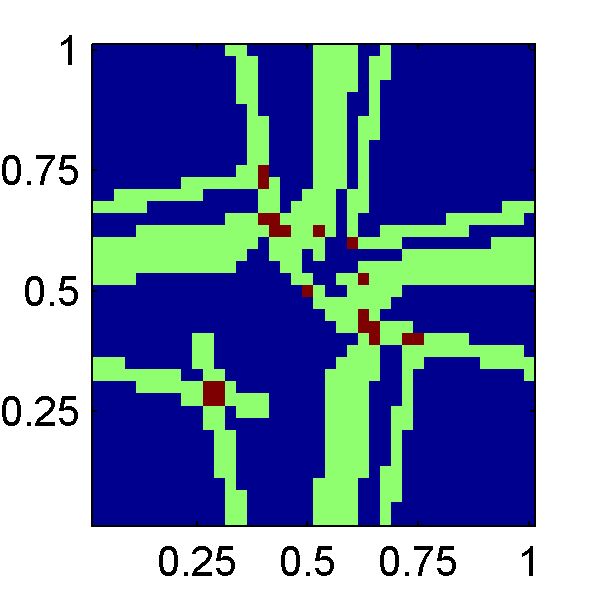}
\includegraphics[width=5cm]{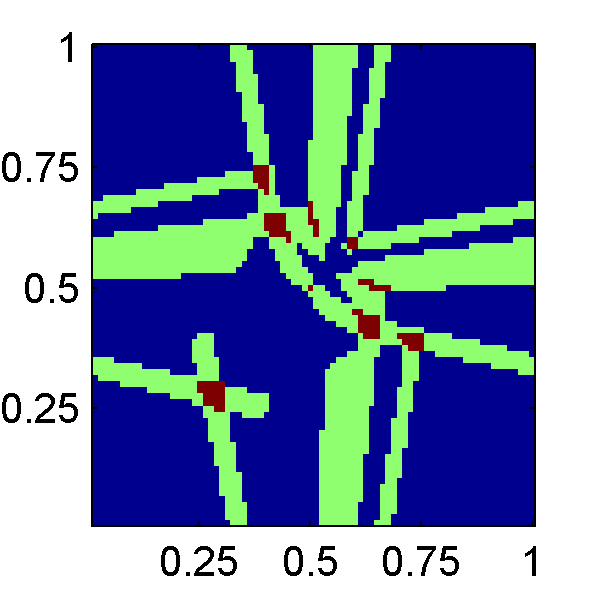}
\includegraphics[width=5cm]{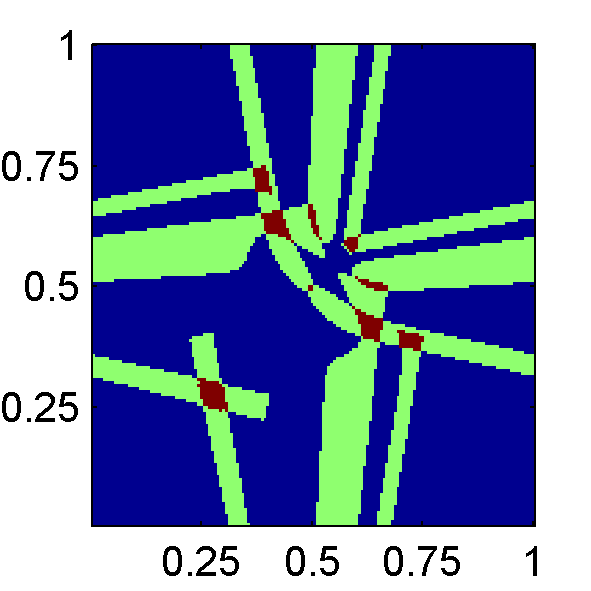}
\caption{
In Example 3, the number of phases obtained in the $\textsc{PhaseSep}$ procedure for each tile $K$, i.e. $\abs{\widetilde{\Theta}_{K}}$.
These figures are corresponding to $1/h = 10, 20, 40, 80$ and $120$, respectively.
Each tile in these figures represent the number of phase (Blue: 2, Green: 3 Red: 4).
The horizontal and vertical axis represent the $x_1$ and $x_2$ coordinates.}
\label{eg3ndof}
\end{figure}

\begin{figure}[p]
\centering
\includegraphics[height=5cm]{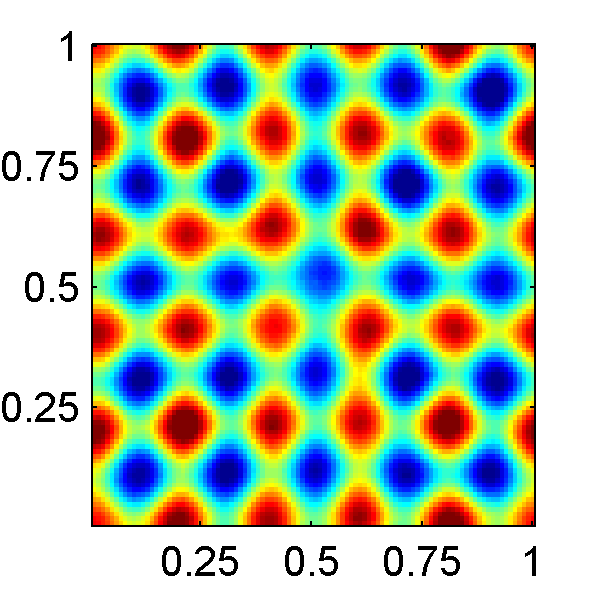}
\includegraphics[height=5cm]{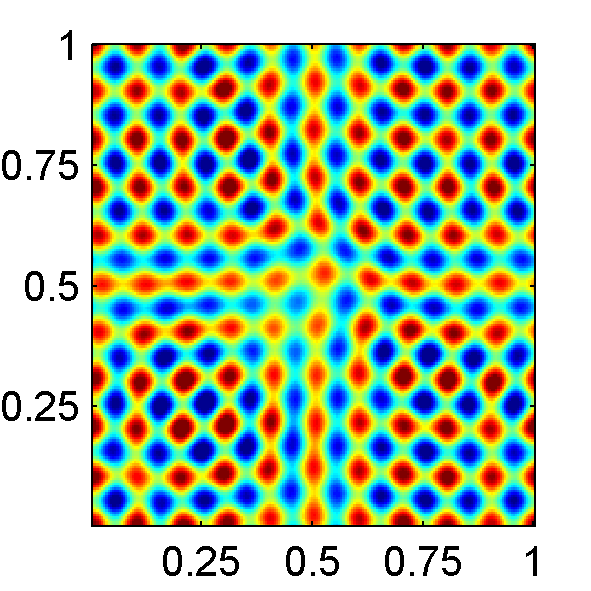}
\includegraphics[height=5cm]{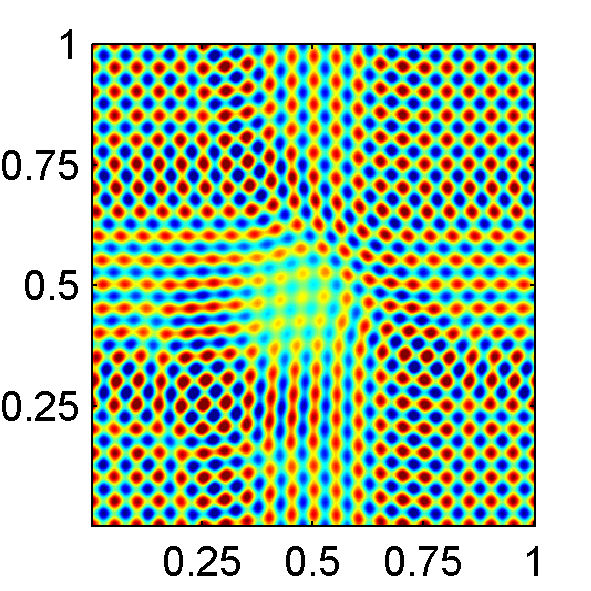}
\includegraphics[height=5cm]{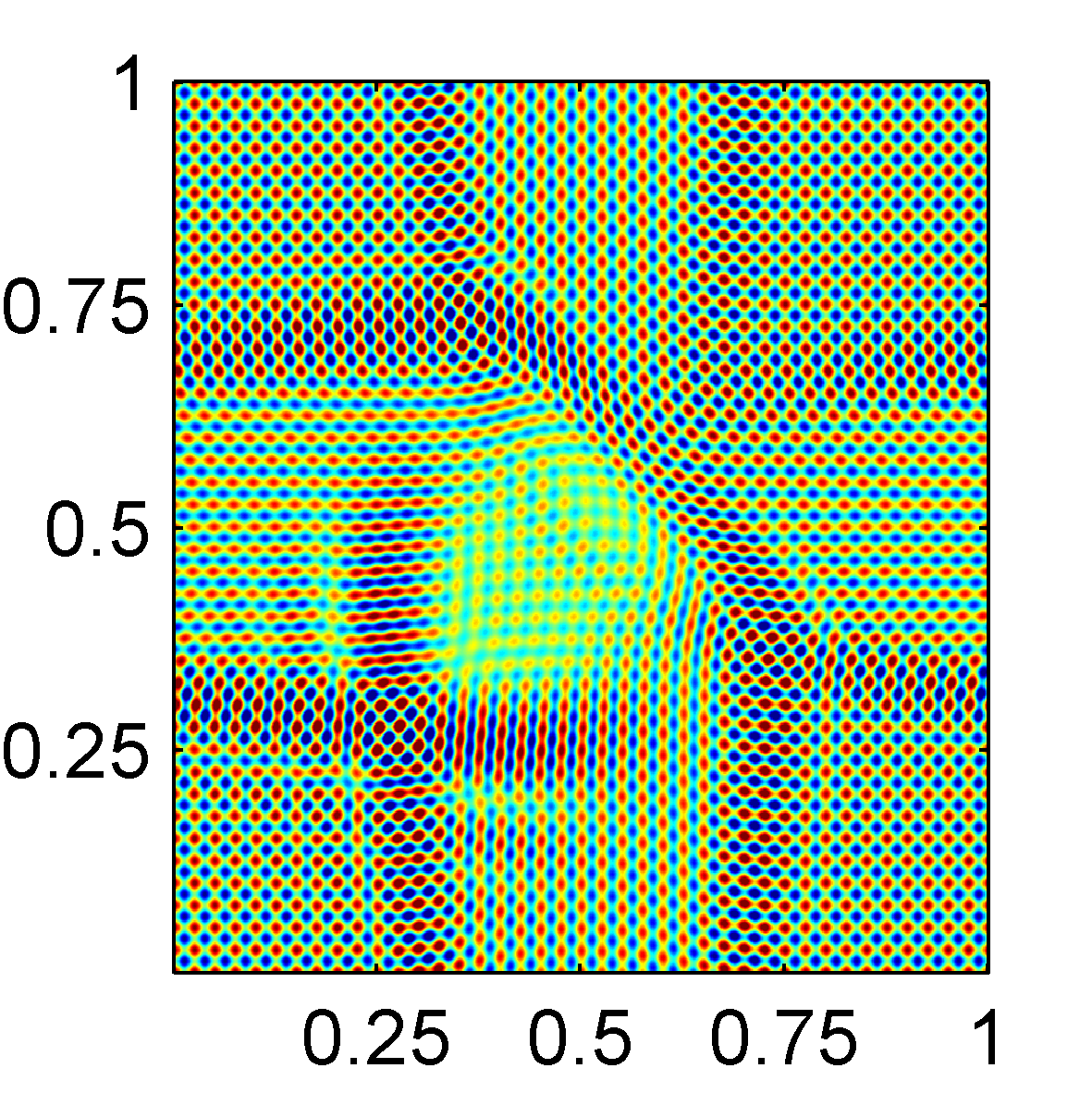}
\includegraphics[height=5cm]{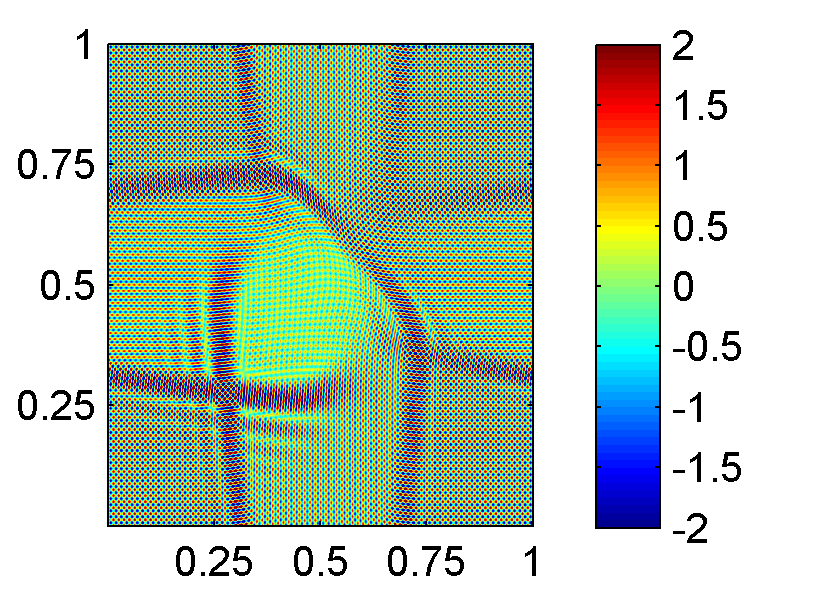}
\caption{In Example 3, the real part of the reference solutions for $1/h = 10, 20, 40, 80$ and $120$, respectively. The horizontal and vertical axis represent the $x_1$ and $x_2$ coordinates.}
\label{eg3ps}
\end{figure}
\begin{figure}[p]
\centering
\includegraphics[height=5cm]{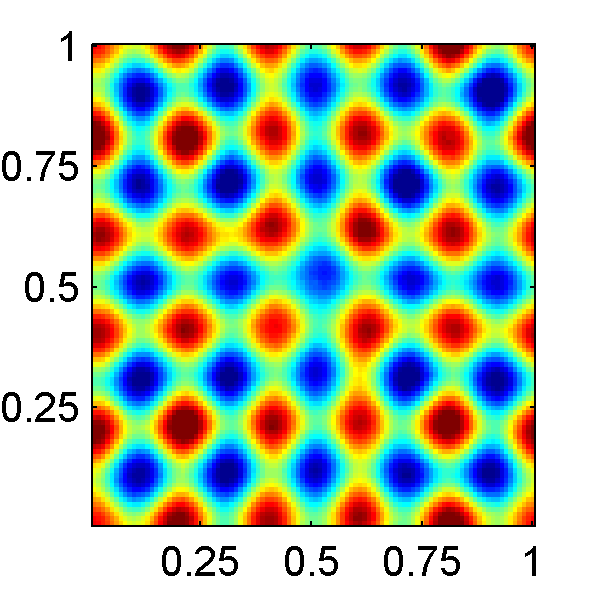}
\includegraphics[height=5cm]{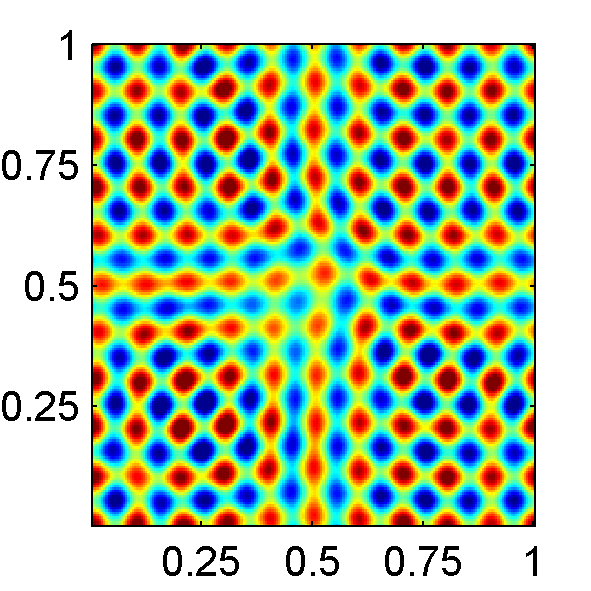}
\includegraphics[height=5cm]{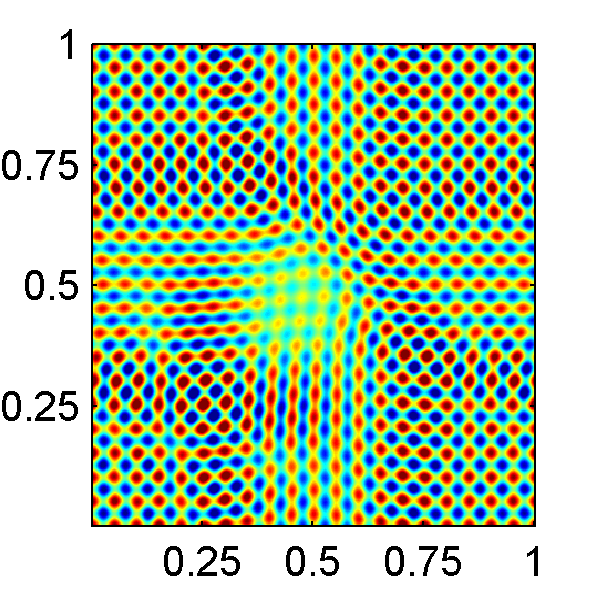}
\includegraphics[height=5cm]{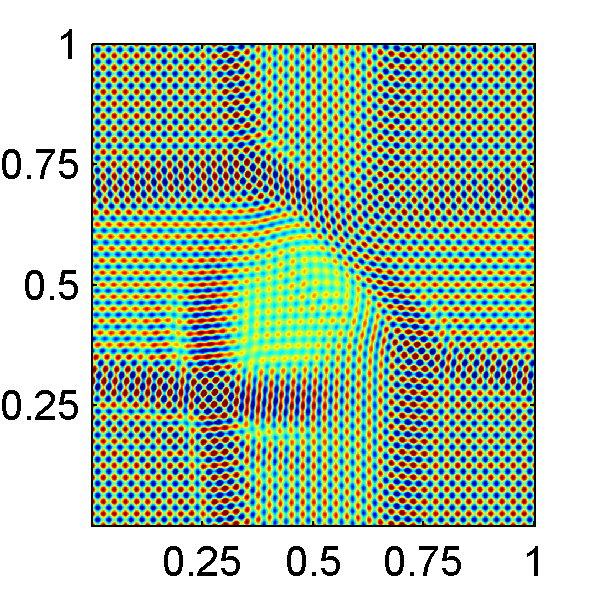}
\includegraphics[height=5cm]{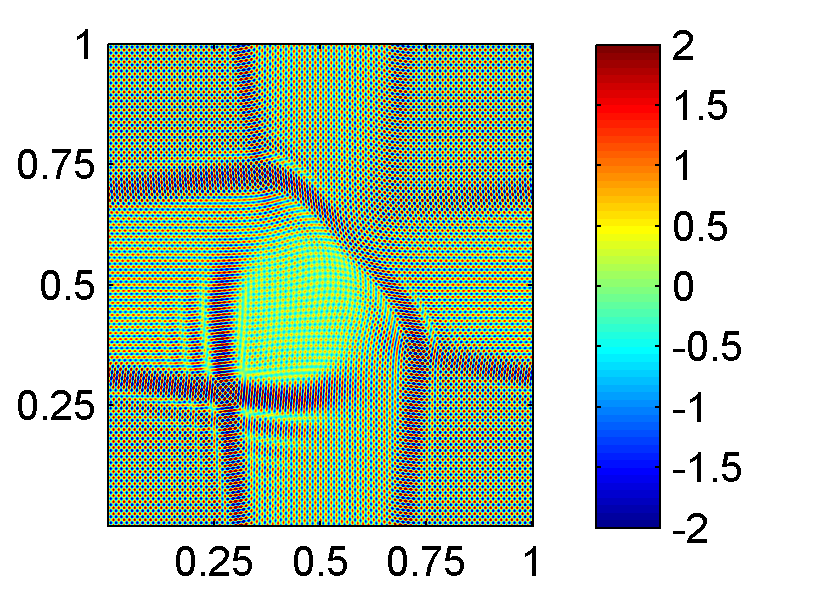}
\caption{In Example 3, the real part of the POD solutions for $1/h = 10, 20, 40, 80$ and $120$, respectively. The horizontal and vertical axis represent the $x_1$ and $x_2$ coordinates.}
\label{eg3dg}
\end{figure}
\begin{figure}[p]
\centering
\includegraphics[height=5cm]{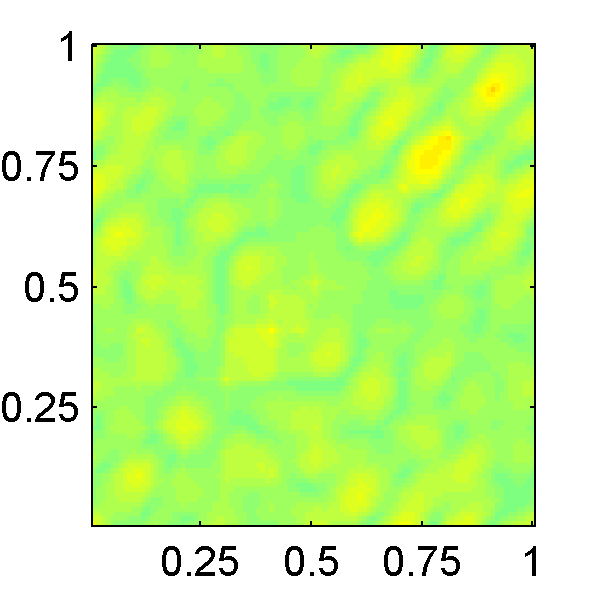}
\includegraphics[height=5cm]{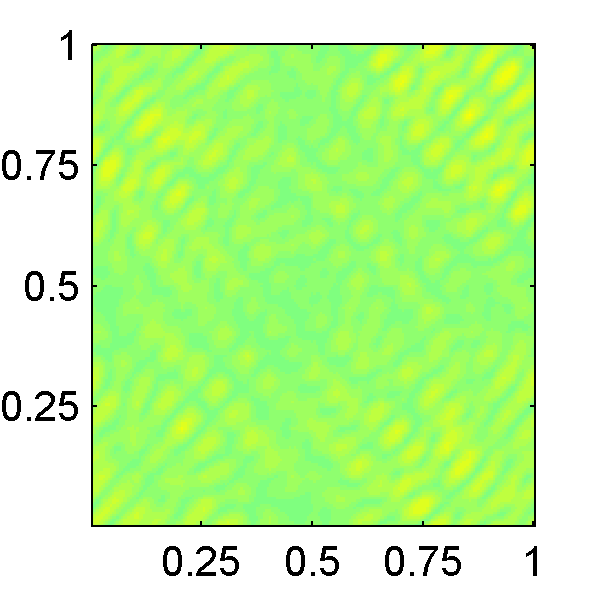}
\includegraphics[height=5cm]{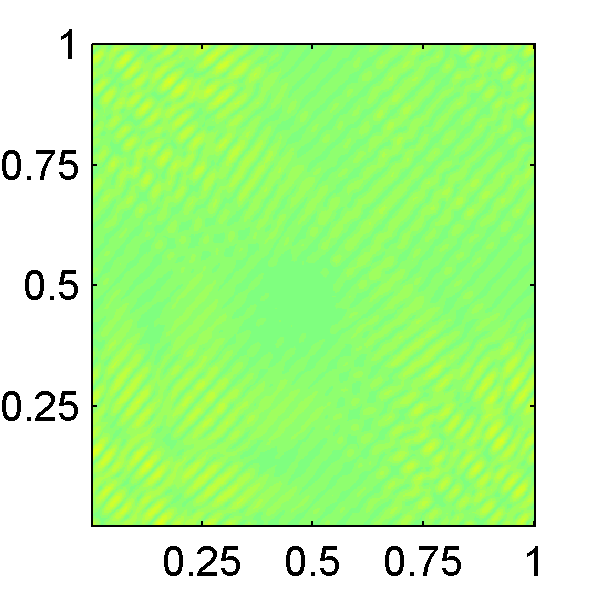}
\includegraphics[height=5cm]{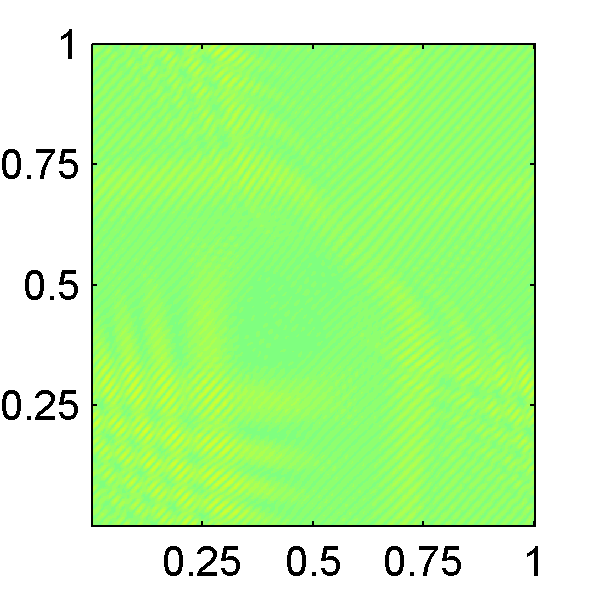}
\includegraphics[height=5cm]{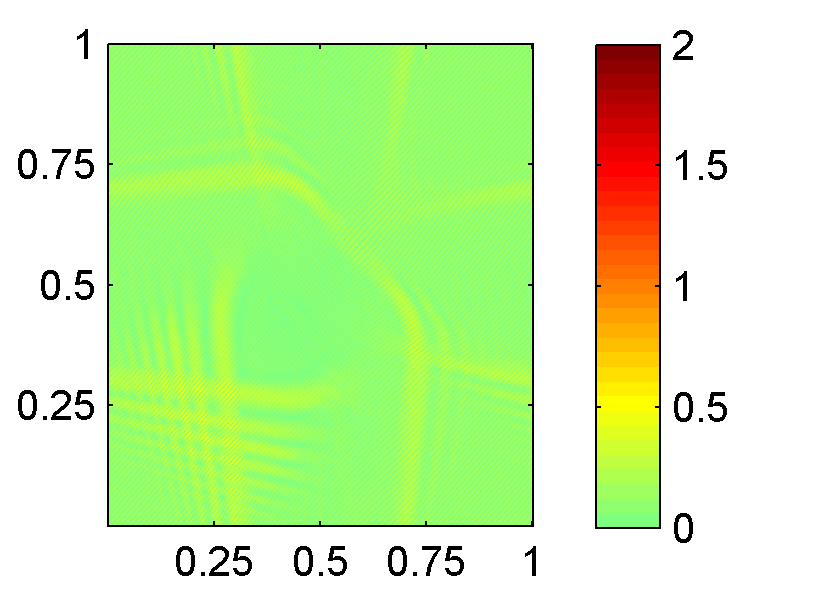}
\caption{In Example 3, the absolute difference between the
POD solutions and the reference solutions in Example 3 for $1/h = 10, 20, 40, 80$ and $120$, respectively. The horizontal and vertical axis represent the $x_1$ and $x_2$ coordinates.}
\label{eg3diff}
\end{figure}

\begin{figure}[p]
\centering
\includegraphics[width=5cm]{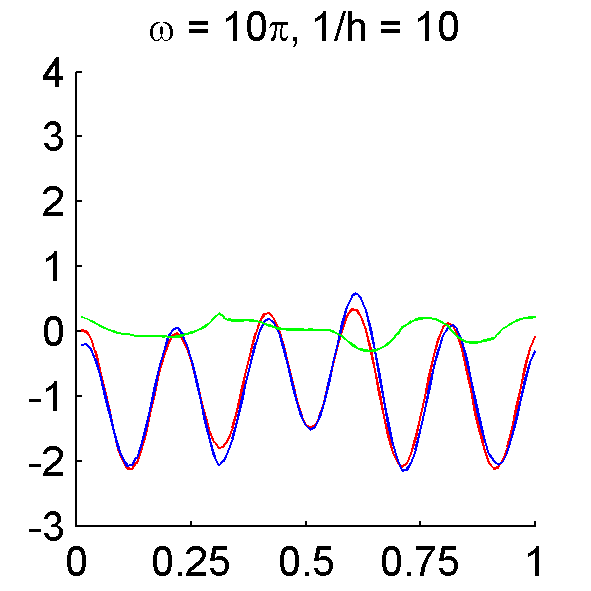}
\includegraphics[width=5cm]{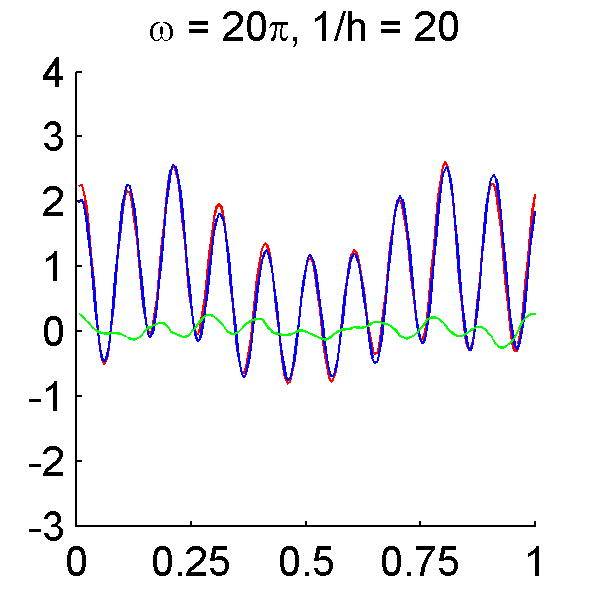}
\includegraphics[width=5cm]{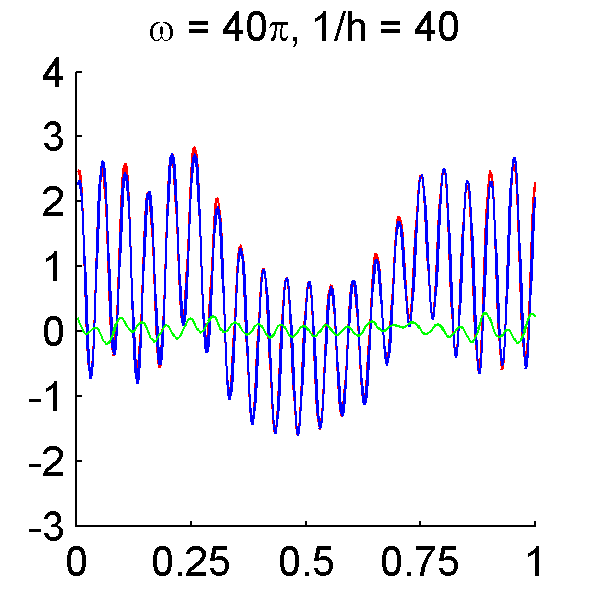}
\includegraphics[width=5cm]{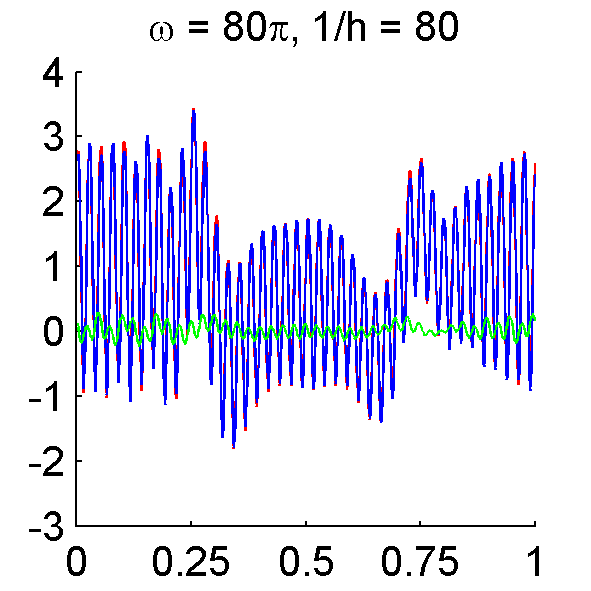}
\includegraphics[width=10cm]{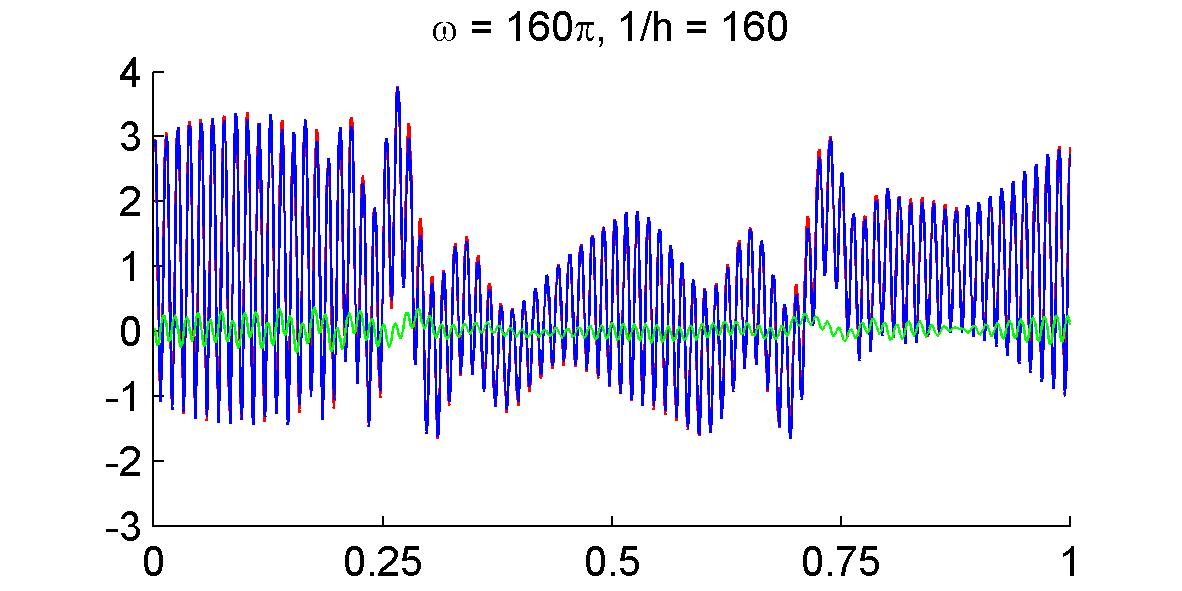}
\caption{ In Exmaple 3, the real part of solutions obtained from our method (blue),
the reference solutions (red) and the difference between these solutions (green)
at $T=1$ and $x_2 = 0.3$
under different settings of mesh size $h$ and the parameter $\omega$.
The horizontal axis represent the $x_1$ coordinate.}
\label{eg3waveform}
\end{figure}
\subsection{Example 4}
In this example, we consider a periodic velocity and observe the behavior of our approximate solution as $\omega\to\infty$ while keeping $\omega h = \bigO{1}$, using the POD system.
We consider the wave speed $c_2$, and take the initial solution so that $L=1$,
$\phi_1(x) = x_1$,
$A_1(x) = 1$ and $B_1(x,\omega) = -i\omega$. 
All other parameters are taken as before.


In Table \ref{eg4tblerr} we list the relative errors of the approximate solutions.
We also compare the condition numbers and degrees of freedom of the POD systems to those of the original systems.
We observe that the POD systems has a better conditioning with number of degrees of freedom about the same with the original systems.
We see that the relative $L^2$-error of the numerical solution for the POD system does not increase significantly as the $\omega$ increase.

\begin{table}[h]
\centering
\begin{tabular}{|c|c|c|c|c|c|c|c|}
\hline
\multirow{2}{*}{$\omega$} &
\multirow{2}{*}{$1/h$} &
\multirow{2}{*}{$\omega h$} & 
\multicolumn{2}{|c|}{Condition number} &
\multicolumn{2}{|c|}{Number of dof} & Relative \\ \cline{4-7}
 &  &  & 
original & POD & original & POD
& $L^2$ error (\%)\\ \hline
$10 \pi$ & 10 & \multirow{5}{*}{$\pi$} & 1.37e+10 & 8.04e+07 & 1256 & 940 & 8.58\\ 
$20 \pi$ & 20 & & 6.76e+11 & 4.99e+07 & 4880 & 3654 & 8.82 \\ 
$40 \pi$ & 40 & & 1.59e+11 & 1.46e+08 & 19792 & 15200 & 7.67 \\ 
$80 \pi$ & 80 & & 5.80e+12 & 1.85e+08 & 79244 & 59535 & 9.37 \\ 
$160 \pi$ & 160 & & 8.77e+12 & 1.82e+08 & 319592 & 240167 & 10.33 \\ \hline
\end{tabular}
\caption{ In Example 4, the condition number
and the number of degrees of freedom of the original system and the POD systems,
and the relative $L^2$ error obtained by comparing the solution of the POD systems to the reference solution, under different settings of $\omega$ and meshsize $h$.}
\label{eg4tblerr}
\end{table}
We also demonstrate the quality of the solution by comparing our solution to the reference solutions in Figure~\ref{eg4ps} and \ref{eg4dg}.
In Figure~\ref{eg4diff}, we compute the absolute difference between our solutions and the reference solutions.
In Figure~\ref{eg4waveform}, we compare the real parts of our solutions to the reference solutions at $T = 1$ and $x_2 = 0.3$.

\begin{figure}[p]
\centering
\includegraphics[height=5cm]{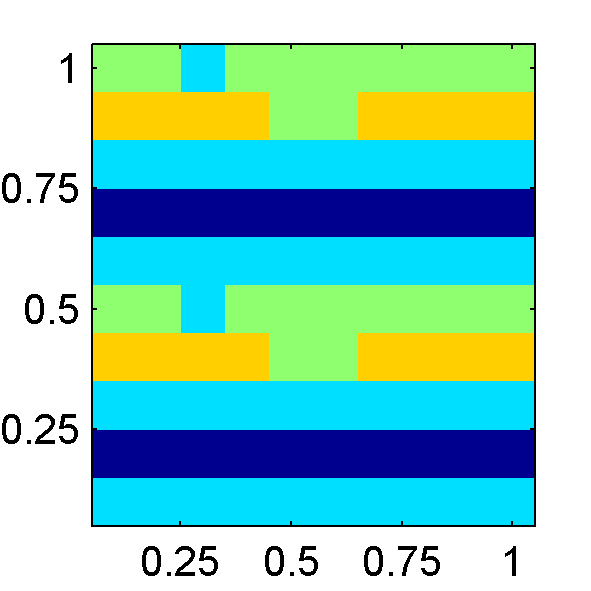}
\includegraphics[height=5cm]{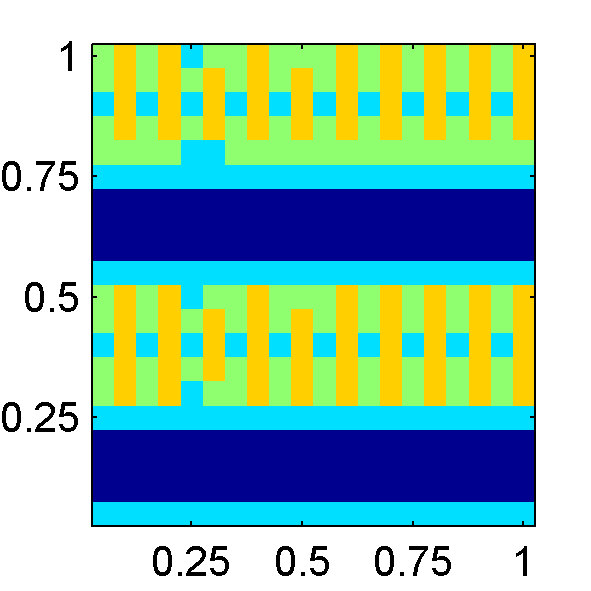}
\includegraphics[height=5cm]{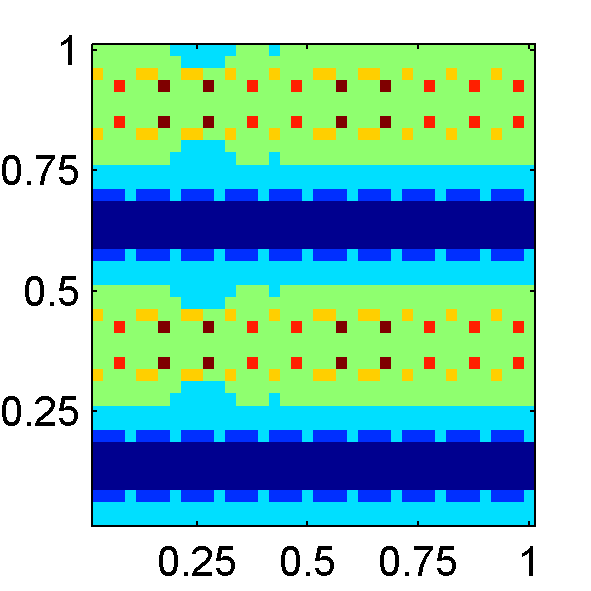}
\includegraphics[height=5cm]{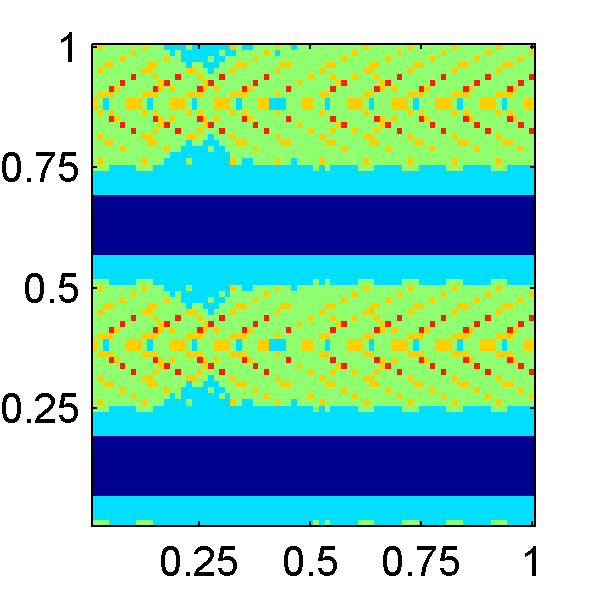}
\includegraphics[height=5cm]{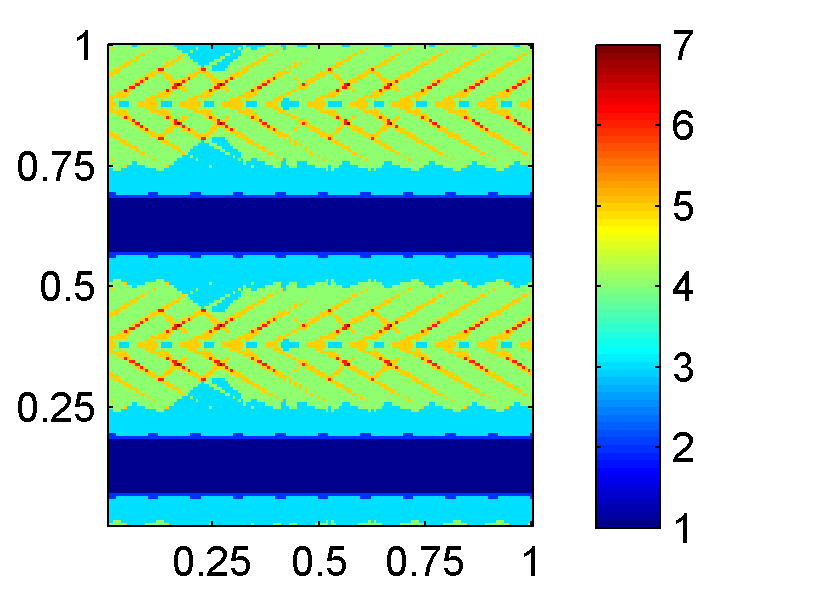}
\caption{
In Example 4, the number of phases obtained in the $\textsc{PhaseSep}$ procedure for each tile $K$, i.e. $\abs{\widetilde{\Theta}_{K}}$.
These figures are corresponding to $1/h = 10, 20, 40, 80$ and $120$, respectively.
Each tile in these figures represent the number of phase.
The horizontal and vertical axis represent the $x_1$ and $x_2$ coordinates.}
\label{eg4ndof}
\end{figure}

\begin{figure}[p]
\centering
\includegraphics[height=5cm]{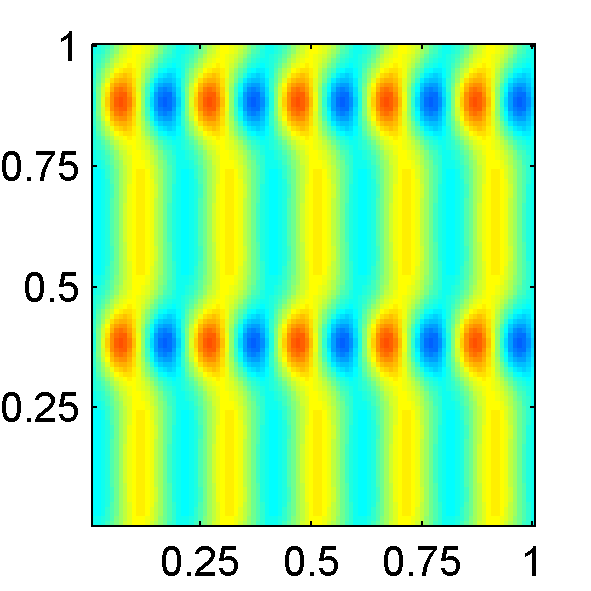}
\includegraphics[height=5cm]{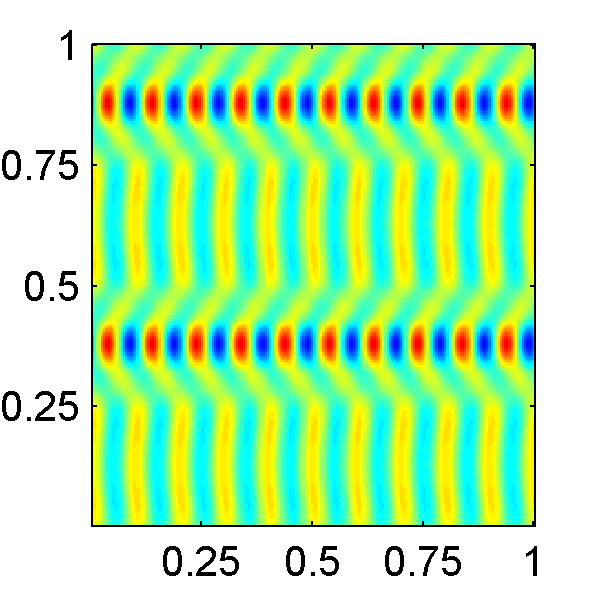}
\includegraphics[height=5cm]{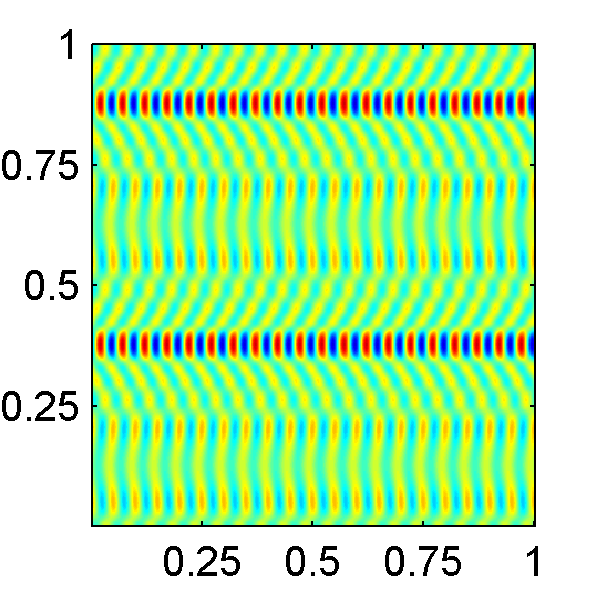}
\includegraphics[height=5cm]{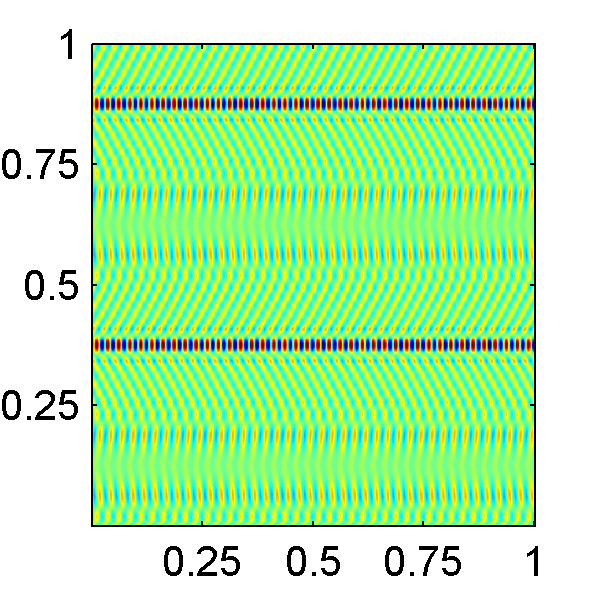}
\includegraphics[height=5cm]{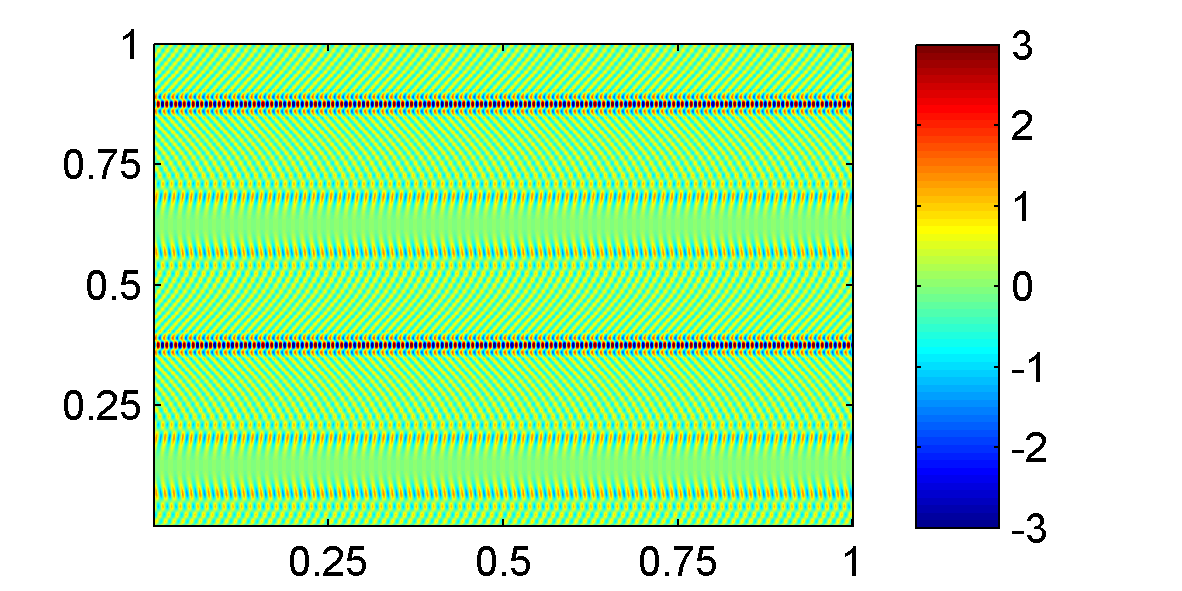}
\caption{In Example 4, the
real part of the reference solutions for $1/h = 10, 20, 40, 80$ and $120$, respectively. The horizontal and vertical axis represent the $x_1$ and $x_2$ coordinates.}
\label{eg4ps}
\end{figure}

\begin{figure}[p]
\centering
\includegraphics[height=5cm]{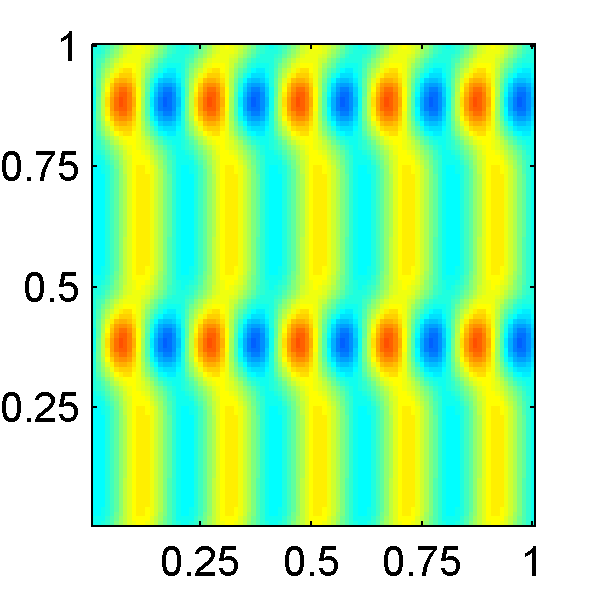}
\includegraphics[height=5cm]{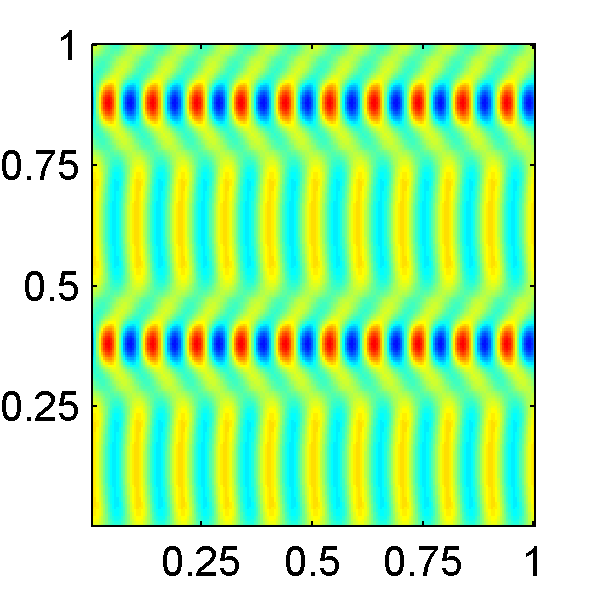}
\includegraphics[height=5cm]{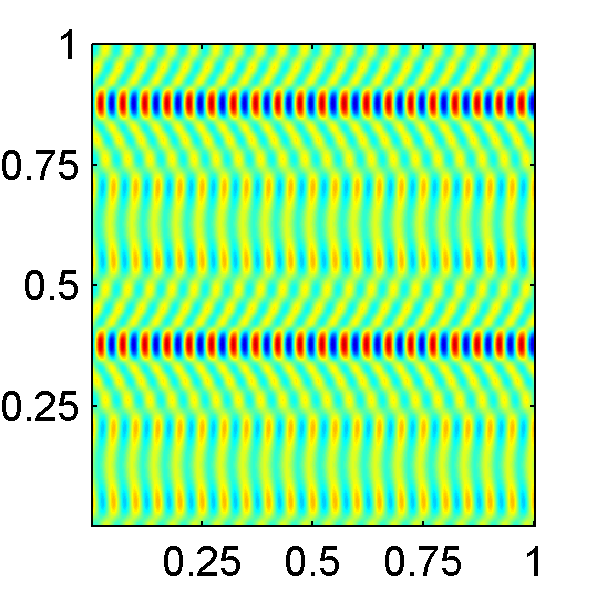}
\includegraphics[height=5cm]{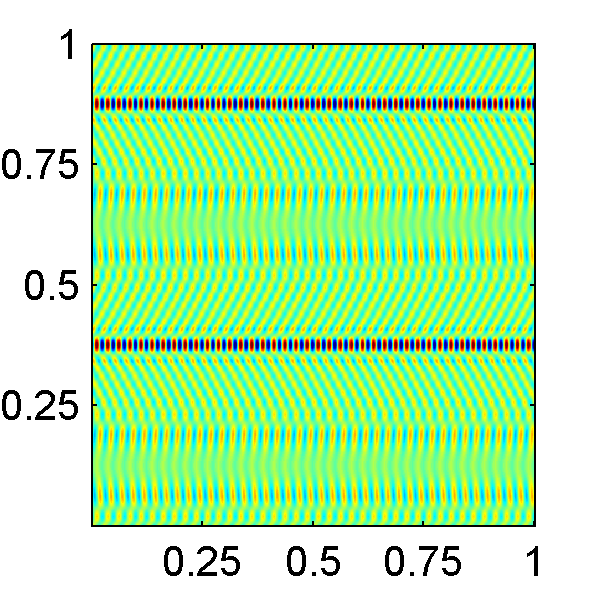}
\includegraphics[height=5cm]{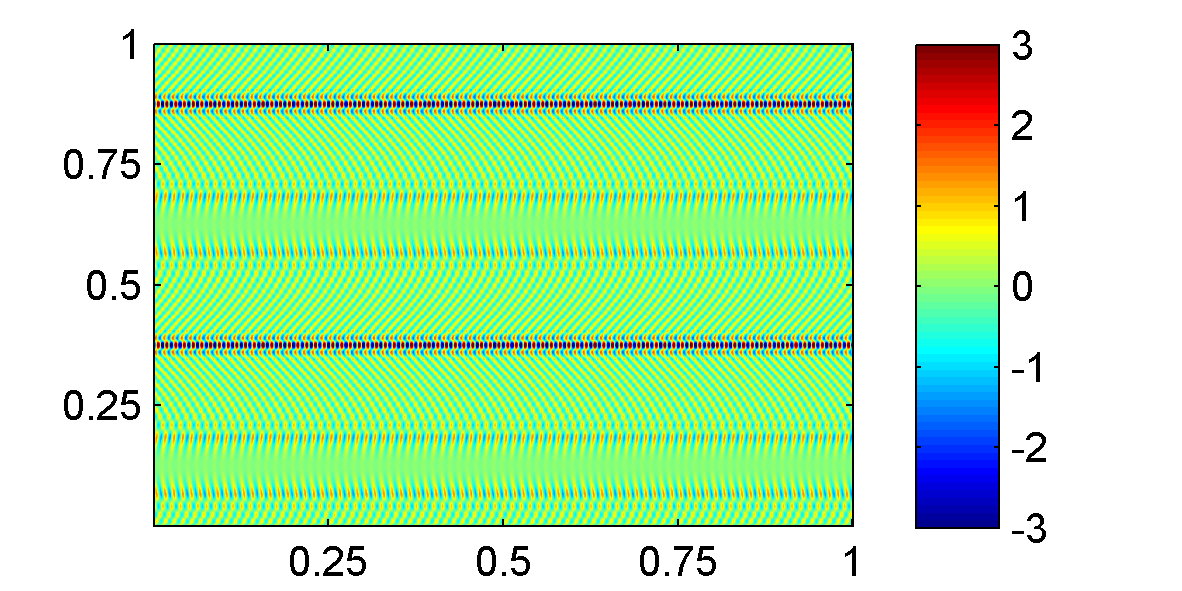}
\caption{In Example 4,
real part of the POD solutions for $1/h = 10, 20, 40, 80$ and $120$, respectively. The horizontal and vertical axis represent the $x_1$ and $x_2$ coordinates.}
\label{eg4dg}
\end{figure}

\begin{figure}[p]
\centering
\includegraphics[height=5cm]{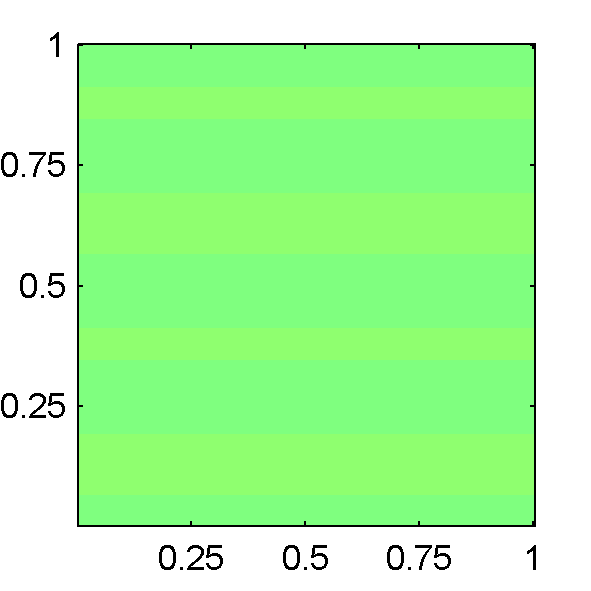}
\includegraphics[height=5cm]{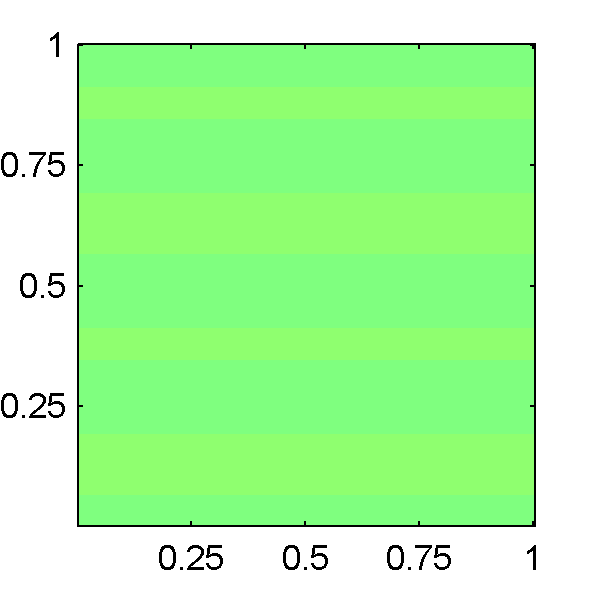}
\includegraphics[height=5cm]{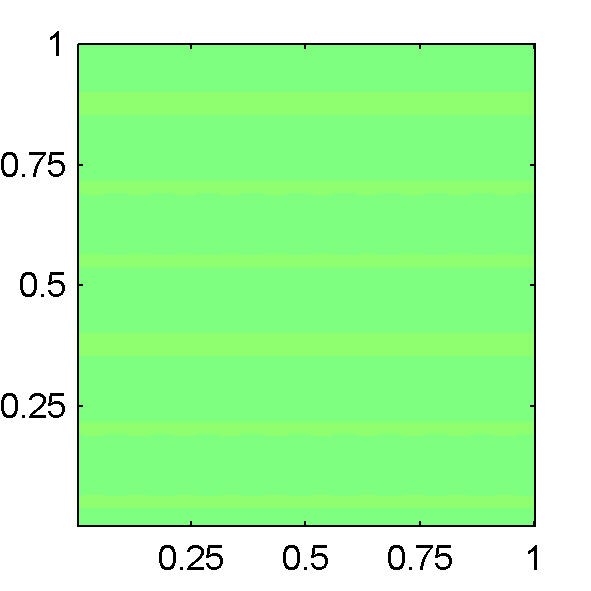}
\includegraphics[height=5cm]{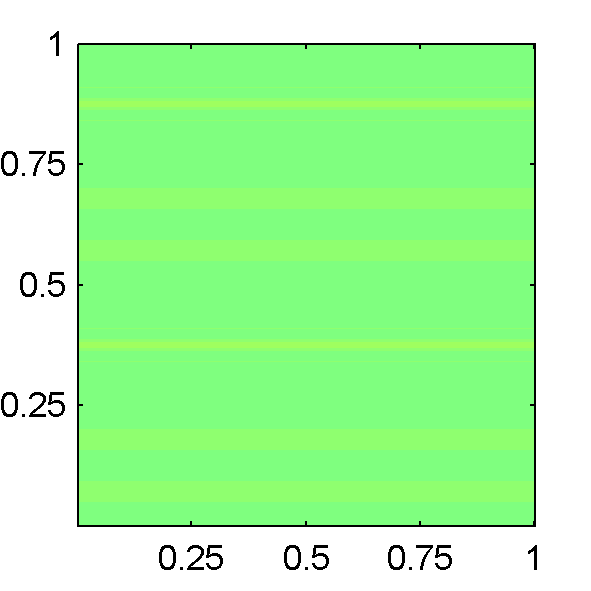}
\includegraphics[height=5cm]{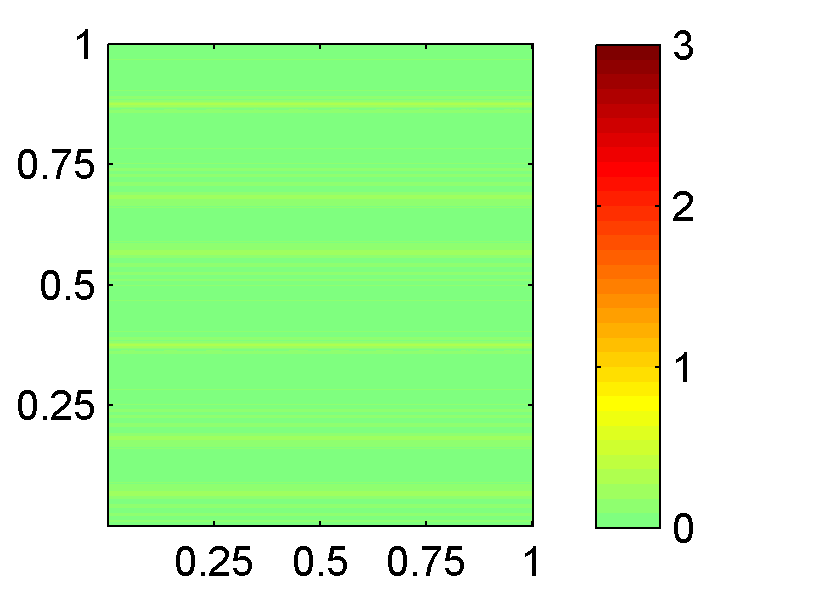}
\caption{In Example 4, 
This figure shows the absolute difference between the
POD solutions and the reference solutions for $1/h = 10, 20, 40, 80$ and $120$, respectively. The horizontal and vertical axis represent the $x_1$ and $x_2$ coordinates.}
\label{eg4diff}
\end{figure}

\begin{figure}[p]
\centering
\includegraphics[height=5cm]{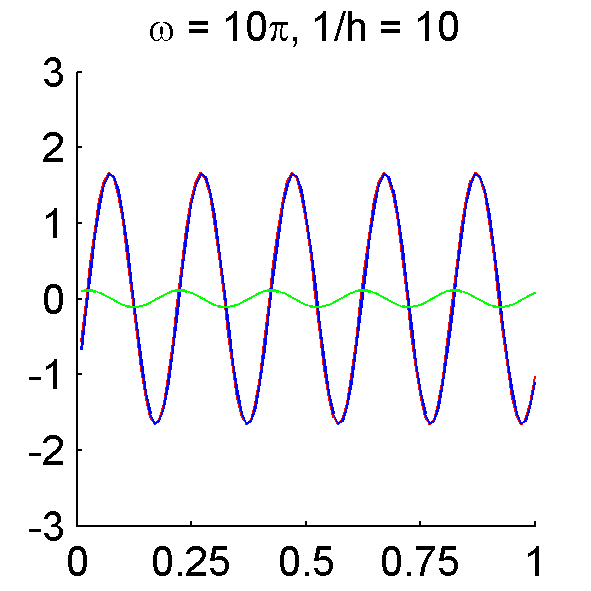}
\includegraphics[height=5cm]{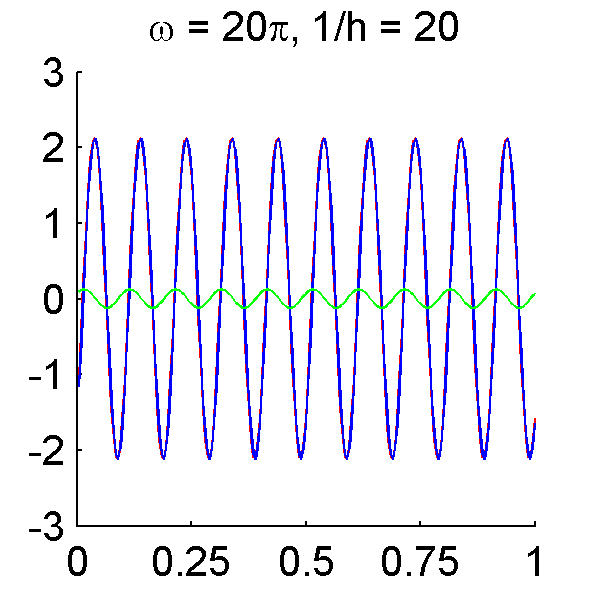}
\includegraphics[height=5cm]{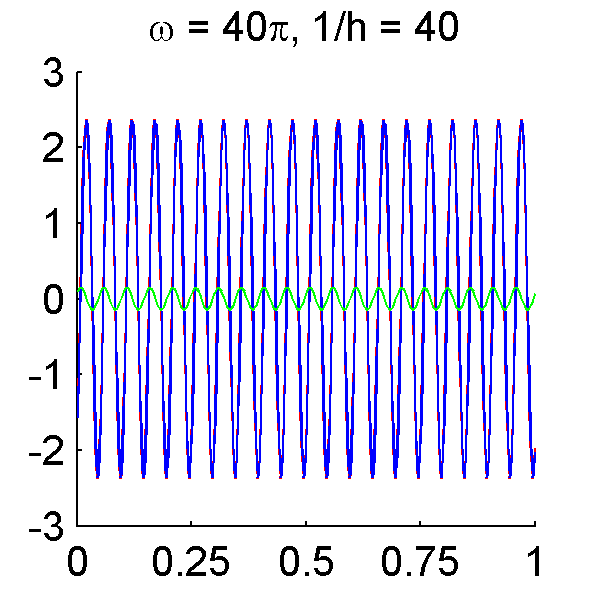}
\includegraphics[height=5cm]{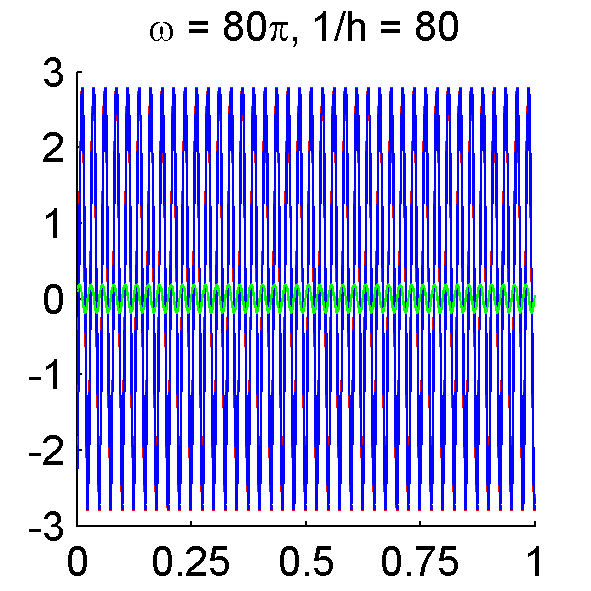}
\includegraphics[height=5cm]{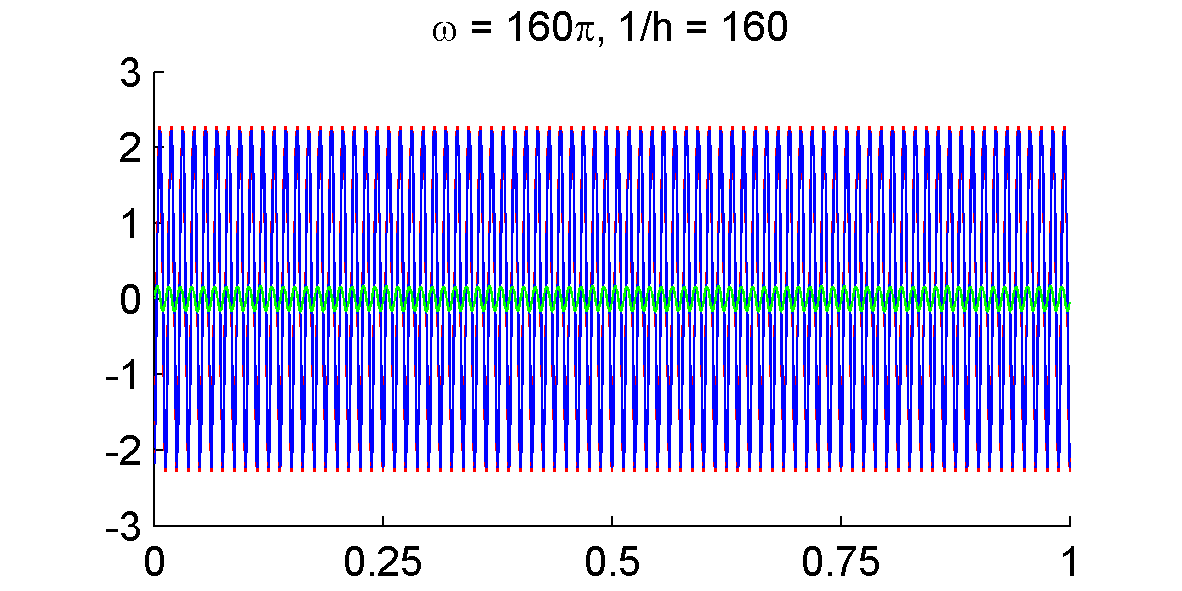}
\caption{ In Example 4, the real part of solutions obtained from our method (blue),
the reference solutions (red) and the difference between these solutions (green)
at $T=1$ and $x_2 = 0.37$
under different settings of mesh size $h$ and the parameter $\omega$.
The horizontal axis represent the $x_1$ coordinate.}
\label{eg4waveform}
\end{figure}

\section{Conclusion}
We proposed a new method for solving the time-domain acoustic wave propagation problem with smoothly varying inhomogeneous media in the high-frequency regime. 
Our method is based on plane-wave type basis functions, with the phases carefully computed by using ideas
from geometrical optics, wavefront tracking, and dimensional reduction. 
The numerical results show evidence that the accuracy of the solution of our proposed method is dramatically better than standard IPDG method and the proposed method's accuracy do not worsen significantly as frequency increases, in contrast to standard IPDG method with polynomial basis.
In the future, we plan to design numerical schemes for more complicated heterogeneous and multiscale media
by combining the approach in this paper and some recent ideas on multiscale schemes \cite{chung2016adaptive}. 

\section*{Acknowledgement}
Chung is partially supported by Hong Kong RGC General Research Fund (Project: 14301314)
and CUHK Direct Grant for Research 2016-17.
 Qian is partially supported by NSF grants (1522249 and 1614566).
 
\bibliography{ref,myref}{}
\bibliographystyle{abbrv}
\end{document}